\pgfplotsset{compat=1.15}
\definecolor{white}{rgb}{1,1,1}
\definecolor{xdxdff}{rgb}{0.49019607843137253,0.49019607843137253,1}
\definecolor{ududff}{rgb}{0.30196078431372547,0.30196078431372547,1}
\definecolor{qqffff}{rgb}{0,1,1}
\definecolor{qqffqq}{rgb}{0,1,0}
\definecolor{ffqqqq}{rgb}{1,0,0}
\definecolor{qqqqff}{rgb}{0,0,1}
\definecolor{qqqqcc}{rgb}{0,0,0.8}
\definecolor{ccqqqq}{rgb}{0.8,0,0}
\definecolor{grey}{rgb}{0.4,0.4,0.4}
\definecolor{zzffzz}{rgb}{0.85,1,0.85}
\definecolor{zzffff}{rgb}{0.85,1,1}
\newtheorem{lemma}{Lemma}[section]
\newaliascnt{theorem}{lemma}  
\newtheorem{theorem}[theorem]{Theorem} 
\Crefname{theorem}{Theorem}{Theorems} 
\theoremstyle{definition}
\newtheorem{definition}{Definition}[section]
\theoremstyle{plain}
\newtheorem{obs}{Observation}[section]
\newtheorem{ques}{Question}[section]
\theoremstyle{remark}
\newtheorem*{remark}{Remark}
\newcommand{\kcut}{\Delta_{k}}
\newcommand{\lno}{\left(}
\newcommand{\rno}{\right)}
\newcommand{\lsq}{\left[}
\newcommand{\rsq}{\right]}
\newcommand{\lcu}{\left\{}
\newcommand{\rcu}{\right\}}
\newcommand{\precc}{\mathrel{\prec\!\!\prec}}
\newcommand{\succc}{\mathrel{\succ\!\!\succ}}
\newcommand{\medsize}{\fontsize{11pt}{13pt}\selectfont}
\newcommand{\caseheading}[1]{%
  \par\medskip\noindent\textbf{#1}\quad\ignorespaces
}
\let\svthefootnote\thefootnote
\newcommand\freefootnote[1]{%
  \let\thefootnote\relax%
  \footnotetext{#1}%
  \let\thefootnote\svthefootnote%
}
\title{\textbf{On Shellability of 3-Cut Complexes of Hexagonal Grid Graphs}}
\author[1]{Himanshu Chandrakar}
\affil[1]{Department of Mathematics, Indian Institute of Technology Bhilai, \texttt{himanshuc@iitbhilai.ac.in}} 
\date{}
\begin{document}

\maketitle 

\freefootnote{2020 \textit{Mathematics Subject Classification.} 05C69, 05E45, 55P15, 57M15}
\freefootnote{\textit{Keywords and phrases.} $k$-cut complex; shellable simplicial complex; hexagonal grid graph; homotopy type}
\begin{abstract}
    The $k$-cut complex was recently introduced by Bayer et al. as a generalization of earlier work of Fr{\"o}berg (1990) and Eagon and Reiner (1998), and was shown to be shellable for several classes of graphs. In this article, we prove that the $3$-cut complex of the hexagonal grid graph $H_{1 \times m \times n}$ is shellable for all $m,n \geq 1$, by constructing an explicit shelling order using reverse lexicographic ordering. From this shelling, we determine the number of spanning facets, denoted by $\psi_{m,n}$, and deduce that the complex is homotopy equivalent to a wedge of $\psi_{m,n}$ spheres of dimension $\lno 2m + 2n + 2mn - 4 \rno$, where $$\psi_{m,n} = \binom{2m+2n+2mn-1}{2} - \lsq \lno 6m+2 \rno n + (2m-4) \rsq.$$

    While these topological properties can be obtained from general results of Bayer et al., we provide an explicit combinatorial construction of a shelling order, yielding a direct counting formula for the number of spheres in the wedge sum decomposition.
\end{abstract}

\section{Introduction}

Simplicial complexes arising from graph-based properties are well-studied objects in topological combinatorics. The books by Jonsson~\cite{jonsson2008simplicial} and Kozlov~\cite{kozlov2008combinatorial} are excellent works in this regard. Simplicial complexes naturally occur at the intersection of various areas of mathematics. Among these, the interaction with commutative algebra is especially prominent, where these complexes appear via Stanley–Reisner theory (see, for instance, \cite{Dochter09, dochter12, hibi92, hoch16, Stan96, van13}).

For instance, this connection has been explored in detail in recent work of Bayer et al. (see \cite{bayer2024cutB, bayer2024cut, bayer2024total}). In 1990, Fr{\"o}berg proved his celebrated theorem~\cite[Theorem~1]{froberg1990} concerning ideals generated by quadratic monomials. Subsequently, Eagon and Reiner~\cite[Proposition~8]{ER98} generalized Fr{\"o}berg’s result using the Alexander dual of the clique complex $\Delta(G)$, denoted by $\Delta_2(G)$, whose facets correspond to complements of independent sets of size~$2$. Motivated by this line of work, Bayer et al. recently introduced two classes of graph complexes: the \textit{total $k$-cut complexes} and the \textit{$k$-cut complexes}. In this article, we focus on $k$-cut complexes (see \Cref{def:cutcomplex}).

In their work, Bayer et al. established the shellability of $k$-cut complexes for a wide range of graph families. This includes trees, threshold graphs, cycles (for $k \geq 3$), complete multipartite graphs, squared paths, and grid graphs (specifically for the case $k = 3$). Additionally, they conjectured that this shellability extends to grid graphs for general $k \geq 1$, as well as to squared cycles when the number of vertices is sufficiently large relative to $k \geq 2$. More recently, Chouhan et al. \cite{Chauhan2025} studied the 3-cut complexes of squared cycle graphs and analyzed their shellability. Also, Chandrakar et al. in~\cite{chandrakar2024topologytotalcutcut}, showed that the $k$-cut complex of $\lno 2 \times n \rno$-grid graphs are shellable when $n \geq 3$ and $3 \leq k \leq 2n-2$.

In this article, we study the shellability of the $3$-cut complex of hexagonal grid graphs (see \Cref{fig: graph of H(1_4_6)}) and determine its homotopy type. We note that these results can be derived from \cite[Proposition~5.4]{bayer2024cut} and \cite[Proposition~5.5]{bayer2024cutB}, where shellability is established using the notion of \textit{minimal forbidden subgraphs}, and from \cite[Proposition~2.18]{bayer2024cutB}, where the homotopy type is computed via the \textit{M{\"o}bius function of the face lattice} of the $3$-cut complex of a graph.

While the general results of Bayer et al. yield both shellability and the homotopy type of this complex through the use of additional structural notions, we provide an independent proof using only the definition of shellability. Our approach is based solely on constructing an explicit shelling order and describing the associated spanning facets. It is worth noting that determining whether a simplicial complex is shellable is an \texttt{NP}-complete problem \cite{shellabilityNPcomplete}. Consequently, finding an explicit shelling order, even for a complex already known to be shellable, is computationally difficult and often requires a detailed understanding of its combinatorial structure.

In particular, we prove the following results,

\begin{theorem}[{\Cref{thm: shellability of hexagonal grid graphs}}]
    For $m, n\geq 1$, the simplicial complex $\Delta_3\left(H_{1\times m \times n}\right)$ is shellable.
\end{theorem}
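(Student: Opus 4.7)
The plan is to construct an explicit shelling of $\Delta_3(H_{1\times m\times n})$ using a reverse lexicographic order on the $3$-subsets of removed vertices. Recall that the facets of $\Delta_3(H_{1\times m\times n})$ are in bijection with the $3$-subsets $W \subseteq V(H_{1\times m\times n})$ for which the induced subgraph $G[W]$ is disconnected, via $W \mapsto F_W := V \setminus W$. Since a $3$-vertex subgraph is disconnected iff it has at most one edge, such $W$ fall into two families: \emph{edgeless triples} (three pairwise non-adjacent vertices) and \emph{one-edge triples} (an edge $\{a,b\}$ together with a vertex $c$ non-adjacent to both $a$ and $b$).

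First I would fix a total order $v_1 \prec v_2 \prec \cdots \prec v_N$, with $N = 2m+2n+2mn$, on the vertices of $H_{1\times m\times n}$ obtained by traversing the hexagonal grid row-by-row from top to bottom and, within each row, from left to right, so that edges of the grid admit a clean description in coordinates. This order induces the \emph{reverse lexicographic} order on sorted triples: $\{w_1 \prec w_2 \prec w_3\} \prec_{\mathrm{rlex}} \{w_1' \prec w_2' \prec w_3'\}$ iff at the largest index $i$ for which $w_i \neq w_i'$ one has $w_i \prec w_i'$. Ordering the facets by $\prec_{\mathrm{rlex}}$ of their associated $W$ is the candidate shelling.

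Next I would verify the shelling condition in its standard form: for every pair $F_W \prec_{\mathrm{rlex}} F_{W'}$ I must produce a facet $F_{W''}$ with $F_{W''} \prec_{\mathrm{rlex}} F_{W'}$ and a vertex $v$ such that $F_W \cap F_{W'} \subseteq F_{W''} \cap F_{W'} = F_{W'} \setminus \{v\}$. Picking $v \in W \setminus W'$, this amounts to choosing $y \in W'$ so that $W'' = (W' \setminus \{y\}) \cup \{v\}$ is again a disconnecting triple with $W'' \prec_{\mathrm{rlex}} W'$. The verification splits into a case analysis organized by (a) whether $W'$ is edgeless or one-edge, and (b) which position of $W'$ is being swapped out (bottom, middle, or top). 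Running this argument in parallel, I would also identify the restriction set $R(F_{W'}) = \{v \in F_{W'} : F_{W'} \setminus \{v\} \subseteq F_{W''} \text{ for some } W'' \prec_{\mathrm{rlex}} W'\}$, which then directly yields the list of spanning facets and the explicit count $\psi_{m,n}$ recorded in the abstract.

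The principal obstacle will be establishing the existence of the swap partner $y$ when $W'$ is a one-edge triple whose edge sits ``high'' in the order, so that the only legal swap must alter $w_3'$ while preserving disconnectedness; in that regime $v \in W \setminus W'$ must be shown to be non-adjacent (in $H_{1\times m\times n}$) to the two retained vertices of $W'$. The low maximum degree $3$ of the hexagonal grid makes non-neighbors plentiful in the interior, but the boundary hexagons, whose vertices sit at the start of the order and have degree only $2$, will require separate treatment. A labeling keyed carefully to the row structure of the hexagonal grid should make these boundary cases uniform and the description of $R(F_{W'})$ explicit enough to deduce $\psi_{m,n}$ by direct enumeration.
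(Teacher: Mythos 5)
Your proposal is to order the facets by plain reverse lexicographic order of the removed triples $W = F^c$ and to verify the shelling condition directly by a case analysis. This is exactly the natural first attempt, and the paper indeed starts there---but the paper's central discovery (their Lemma~3.3) is that \emph{plain reverse lex fails to be a shelling}, and the heart of their construction is a repair of that failure. Concretely, the facets $T$ whose complement $T^c$ equals the open neighborhood $N(v)$ of a suitable degree-$3$ vertex $v$ (so $T^c$ is an \emph{edgeless} triple, being an independent set in the bipartite hexagonal grid) have no valid swap partner that precedes them: swapping an element of $N(v)$ for $v$ itself produces an induced $P_3$ (hence not a facet), while swapping for any other admissible vertex produces a triple that comes \emph{later} in the order, not earlier. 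The paper proves that this obstruction is unavoidable for such $T$, then resolves it by excising this family of facets $T_1,\dots,T_{\beta_{m,n}}$ from the reverse-lex list and appending them at the very end; the resulting order is what they show to be a shelling. Your proposal contains no analogue of this relocation step, so the verification you outline will hit a genuine wall at precisely these neighborhood facets, and no amount of careful re-labeling of the grid will make the problem disappear, since the obstruction is intrinsic to $N(v)$ being the complement of the facet.

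A secondary point: you anticipate the difficulty arising from \emph{one-edge} triples ``whose edge sits high in the order,'' but the actual obstructions are the \emph{edgeless} neighborhood triples described above; meanwhile the paper's case analysis is not organized along your edgeless/one-edge axis but by the size of $F_i^c \cap F_j^c \in \{0,1,2\}$, with the sensitive sub-cases being those where a candidate swap either reproduces one of the relocated $T_i$ or accidentally creates an induced $P_3$. You would want to reorganize around that distinction and, crucially, first establish the analogue of Lemma~3.3 before claiming any order is a shelling.
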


The existence of a shelling order allows us to determine the homotopy type of the complex. This leads to the following theorem.

\begin{theorem}[{\Cref{thm: spanning facets and homotopy type}}]
    The 3-cut complex of $H_{1 \times m \times n}$ has the homotopy type of wedge of $\psi_{m,n}$ many $\lno 2m+2n+2mn-4 \rno$-dimensional spheres, where, $$\psi_{m,n} = \binom{2m+2n+2mn-1}{2} - \lsq \lno 6m+2 \rno n + (2m-4) \rsq.$$
    In other words,
    $$
    \Delta_3 \lno H_{1 \times m \times n} \rno \simeq \bigvee\limits_{\psi_{m,n}}{\mathbb{S}^{\lno 2m+2n+2mn-4 \rno}}.
    $$
\end{theorem}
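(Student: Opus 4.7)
The plan is to read off the homotopy type directly from the shelling constructed in the proof of Theorem 1, using the standard fact that a pure shellable simplicial complex of dimension $d$ is homotopy equivalent to a wedge of $d$-spheres, with the number of spheres equal to the number of \emph{spanning} (a.k.a.\ ``homology'') facets, i.e., facets $F$ satisfying $\mathcal{R}(F)=F$ in the shelling. First I would record that each facet of $\Delta_3(H_{1\times m\times n})$ has size $|V(H_{1\times m\times n})|-3 = 2m+2n+2mn-3$, so the complex is pure of dimension $2m+2n+2mn-4$, matching the sphere dimension in the statement. This immediately gives the homotopy equivalence to a wedge of spheres of the claimed dimension; only the count $\psi_{m,n}$ remains.

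Next I would characterize the spanning facets of the explicit reverse-lexicographic shelling from Theorem 1. Under a reverse-lex order (with the vertices of $H_{1\times m\times n}$ labelled so that $v_1$ is the minimum), a facet $F$ is spanning iff for every $u\in F$ there exists $v\in V\setminus F$ with $v<u$ such that $(F\setminus\{u\})\cup\{v\}$ is again a facet. Passing to the complement $S=V\setminus F$ (a $3$-element disconnecting set), this translates into a condition on $S$: writing $S=\{v_1,v_i,v_j\}$ with $v_1<v_i<v_j$, spanning facets correspond to disconnecting triples that contain the minimum vertex $v_1$ and survive every admissible downward swap, i.e., every prescribed substitution of an element of $S$ by a smaller element of $F$ still produces a disconnecting triple. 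Hence the spanning facets are indexed by certain pairs $\{v_i,v_j\}\subseteq V\setminus\{v_1\}$, of which there are at most $\binom{|V|-1}{2}=\binom{2m+2n+2mn-1}{2}$ candidates.

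The count $\psi_{m,n}$ is then obtained as this total minus the number of ``bad'' pairs, those for which $\{v_1,v_i,v_j\}$ fails the spanning criterion. I would identify the bad pairs by a case analysis tied to the geometry of the hexagonal grid: the factor $(6m+2)n$ would come from pairs whose local configuration (adjacency along a hexagonal row or column, or pairs lying in a common row/column slab together with the minimum $v_1$) either fails to disconnect or admits no valid downward swap, contributing linearly in $n$ for each of the $m$ layers; the additive correction $2m-4$ would account for the boundary/corner configurations that behave exceptionally. Summing over these cases yields $(6m+2)n+(2m-4)$, and hence the stated formula for $\psi_{m,n}$.

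The main obstacle is unquestionably this final step: carefully enumerating the bad pairs without omission or double counting. Because the hexagonal grid has several kinds of vertices (degree-$2$ corner vertices, degree-$2$ or degree-$3$ boundary vertices, and degree-$3$ interior vertices) and non-trivial local cycle structure, verifying both that each excluded configuration truly fails the spanning condition and that every other pair truly succeeds requires a detailed, position-sensitive case split. The two-term shape of the exceptional count strongly suggests separating the bad pairs into a bulk contribution scaling with the $mn$-sized interior and a boundary/corner correction depending only on $m$, which I would use as the organizing principle for the case analysis.
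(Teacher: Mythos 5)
Your high-level plan matches the paper's: invoke the shellable$\Rightarrow$wedge-of-spheres theorem, observe the complex is pure of dimension $2m+2n+2mn-4$, and then compute $\psi_{m,n}$ as the number of spanning facets by characterizing complements of spanning facets as triples containing one fixed vertex and then subtracting ``bad'' pairs. However, there are two substantive problems.

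First, the fixed vertex is wrong. You assert that every spanning facet's complement contains the \emph{minimum} vertex $v_1$, but under the paper's ordering (reverse lex on facets, equivalently ordinary lex on complements), the correct statement — proved in \Cref{lemma: z is not in any spanning facet} — is that the \emph{maximum} vertex $z=2m+2n+2mn$ lies in the complement of every spanning facet. The reason: to witness $\lambda\in F$ one needs $F_r=(F\setminus\{\lambda\})\cup\{\alpha\}$ with $F_r$ strictly earlier, i.e.\ $F_r^c\succc F^c$; if $\lambda=z$ is the maximum vertex, then for every $\alpha\in F^c$ one has $F_r^c=(F^c\setminus\{\alpha\})\cup\{z\}\precc F^c$, so no such $F_r$ can exist, forcing $z\notin F$. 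Your version happens to produce the same binomial $\binom{2m+2n+2mn-1}{2}$, but it misidentifies the actual set of candidate spanning facets and would not survive the subsequent case analysis. You also do not supply a proof of this characterization; it is merely asserted.

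Second, and more seriously, the entire substance of the count — identifying exactly which pairs $\{x,y\}$ make $\{x,y,z\}^c$ fail the spanning condition, and verifying that the remaining pairs succeed — is left as a gesture (``a case analysis tied to the geometry of the hexagonal grid\dots'') that you yourself acknowledge as ``the main obstacle.'' In the paper this is the content of \Cref{lemma: explicit description of non-spanning pairs} and \Cref{lemma: all other pairs are spanning pairs}: the non-spanning pairs are classified into seven explicit types (of the form $(i,i+1)$, $(i,i+m)$, $(i,i+m+1)$, $(i,i+m+2)$, $(i,i+(m+1)n)$, $(i,i+m+n+mn)$, $(i,i+m+n+mn+1)$, with precise index ranges), a concrete failing vertex $\lambda_{x,y}$ is exhibited for each, and every remaining pair is shown to be spanning via a three-way split on the graph distance between $x$ and $y$. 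Without carrying out this enumeration the formula $\psi_{m,n}$ is not established; and your heuristic that $(6m+2)n$ comes from ``$n$ for each of the $m$ layers'' does not even match the stated coefficient. So what you have is a correct skeleton with a wrong lemma statement and the decisive combinatorial work missing.
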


\caseheading{Organization of the article:}

In \Cref{section: 2 (Preliminaries)}, we define the hexagonal grid graphs $H_{1\times m\times n}$ for $m,n \geq 1$ and recall the necessary background on simplicial complexes, including the notions of shellability and spanning facets that will be used throughout the paper. In \Cref{section: 3 (Shellability of 3-cut complex of hexagonal grid graph)}, we introduce an explicit shelling order on the facets of the $3$-cut complex of $H_{1\times m\times n}$, defined via reverse lexicographic ordering, and prove that it indeed yields a shelling. Finally, in \Cref{section: 4 (Spanning facets and the homotopy type of the 3-cut complex of hexagonal tiling)}, we characterize all spanning facets associated with this shelling order and use this description to determine the homotopy type of $\Delta_3\lno H_{1\times m\times n} \rno$.

\section{Preliminaries}\label{section: 2 (Preliminaries)}

In this section, we introduce the key concepts and results necessary for discussing the article. For $k_1, k_2 \in \mathbb{N}$, such that $1 \leq k_1 < k_2$, we denote
\begin{align*}
    [k_1] &= \lcu 1, 2, 3, \dots, k_1 \rcu; \\
    [k_1, k_2] &= \lcu k_1, k_1+1, k_1+2, \dots, k_2 \rcu.
\end{align*}

\subsection{Basics of graph theory}

A \textit{\textbf{graph}} $G$ is an ordered pair $\lno V(G), E(G) \rno$, where $V(G)$ is the set of vertices and $$E(G) \subseteq \lcu \{u,v\} \mid u,v \in V(G),\ u \neq v \rcu$$
is the set of edges. We use the notation $uv$ to denote edge $\{u,v\}$. We define the \textbf{open neighborhood} of the vertex $v$ in $G$, denoted by $N_G(v)$ as, $$N_G \lno v \rno = \lcu u \in V(G)\ \middle|\ u\neq v\ \text{and}\ \left\{ u,v \right\} \in E \lno G \rno \rcu.$$ Similarly, the \textbf{closed neighborhood} of the vertex $v$ in $G$, denoted by $N_G[v]$ is defined as, $$N_G[v] = N_G(v) \sqcup \lcu v \rcu.$$ When the underlying graph is clear from the context, we omit the subscript and write $N(v)$ and $N[v]$ to denote the open and closed neighborhoods of $v$, respectively.

Let $S$ be a subset of $V(G)$. The \textit{\textbf{induced subgraph}} of $G$ on $S$, denoted by $G\lsq S \rsq$, is the graph whose vertex set is $S$ and whose edge set consists of all edges of $G$ that have both vertices in $S$. We denote the induced
subgraph $G\lsq V\lno G \rno \setminus H \rsq$ by $G \setminus H$, where $H \subset V\lno G \rno$. We now proceed to formally define the hexagonal grid graphs. 

\subsection{Labelling of vertices in the general hexagonal grid graph.}

The hexagonal grid graphs are generally defined using 3 variables $r,s,t \geq 1$ and are denoted by $H_{r \times s \times t}$. For our discussion, we fix $r=1$, that is, we will deal with $H_{1 \times s \times t}$. Here, our focus is primarily on the labelling of vertices, as it will play a crucial role in defining the shelling order. 

\begin{definition}[Hexagonal grid graphs]
    For $m,n \geq 1$, we  define the hexagonal grid graph, $H_{1 \times m \times n}$, as a graph with the following vertices and edges,
\begin{align*}
    V\left(H_{1 \times m \times n}\right) &= \left[2m + 2n + 2mn\right];\\
    E\left(H_{1 \times m \times n}\right) &= \mathcal{E}_1\ \sqcup\ \mathcal{E}_2\ \sqcup\ \mathcal{E}_3\ \sqcup\ \mathcal{E}_4.
\end{align*}
where, for $1 \leq i \leq 4$, 
\begin{align*}
    \mathcal{E}_1\ &=\ \lcu (i)\lno i+m+n+mn \rno\ \middle|\ i \in \lsq m \rsq \cup \lsq 1+n+nm,\ m+n+nm \rsq \rcu;\\
    \mathcal{E}_2\ &=\ \lcu (i)\lno i+n+nm \rno\ \middle|\ i \in \lsq m+1,\ m+n+nm \rsq \rcu;\\
    \mathcal{E}_3\ &=\ \lcu (i)\lno i+m+n+mn+1 \rno\ \middle|\ i \in \lsq m,  n+nm,\ m+n+nm-1 \rsq \rcu;\\
    \mathcal{E}_4\ &=\ \lcu (i)\lno i+m+n+mn+2 \rno\ \middle|\ i \in \lsq m+1,\ n+nm-2 \rsq \rcu.
\end{align*}
\end{definition}

We will use $\left[2m + 2n + 2mn\right]$ and $V\left(H_{1 \times m \times n}\right)$ interchangeably. For example, if $m = 4$ and $n = 6$, then the graph $H_{1 \times 4 \times 6}$ will look like \Cref{fig: graph of H(1_4_6)}. Since the argument in the main result of the proof is rather intricate, we will refer to this example frequently for better understanding.

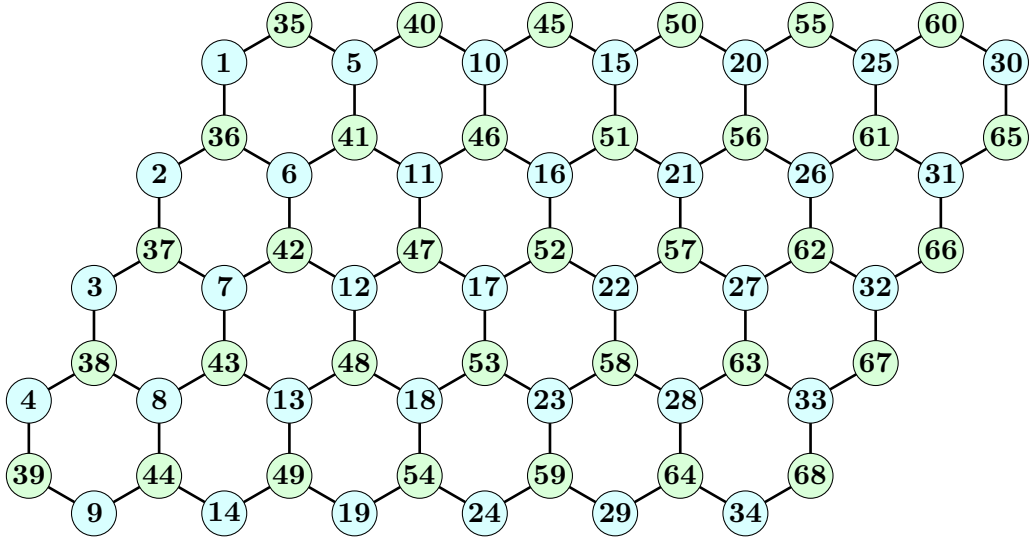
\begin{figure}[H]
\centering
    \begin{tikzpicture}[line cap=round,line join=round,>=triangle 45,x=1cm,y=1cm]
        \clip(-0.5,-0.5) rectangle (14,7.5);
        \draw [line width=1pt] (0.4330127018922193,1.75)-- (0.4330127018922193,0.75);
        \draw [line width=1pt] (0.4330127018922193,0.75)-- (1.299038105676658,0.25);
        \draw [line width=1pt] (1.299038105676658,0.25)-- (2.1650635094610964,0.75);
        \draw [line width=1pt] (2.1650635094610964,0.75)-- (2.1650635094610964,1.75);
        \draw [line width=1pt] (2.1650635094610964,1.75)-- (1.299038105676658,2.25);
        \draw [line width=1pt] (1.299038105676658,2.25)-- (0.4330127018922193,1.75);
        \draw [line width=1pt] (2.1650635094610964,1.75)-- (3.031088913245535,2.25);
        \draw [line width=1pt] (3.031088913245535,2.25)-- (3.8971143170299736,1.75);
        \draw [line width=1pt] (3.8971143170299736,1.75)-- (3.8971143170299736,0.75);
        \draw [line width=1pt] (3.8971143170299736,0.75)-- (3.031088913245535,0.25);
        \draw [line width=1pt] (3.031088913245535,0.25)-- (2.1650635094610964,0.75);
        \draw [line width=1pt] (3.8971143170299736,1.75)-- (4.763139720814412,2.25);
        \draw [line width=1pt] (4.763139720814412,2.25)-- (5.629165124598851,1.75);
        \draw [line width=1pt] (5.629165124598851,1.75)-- (5.629165124598851,0.75);
        \draw [line width=1pt] (5.629165124598851,0.75)-- (4.763139720814412,0.25);
        \draw [line width=1pt] (4.763139720814412,0.25)-- (3.8971143170299736,0.75);
        \draw [line width=1pt] (5.629165124598851,1.75)-- (6.495190528383289,2.25);
        \draw [line width=1pt] (6.495190528383289,2.25)-- (7.361215932167728,1.75);
        \draw [line width=1pt] (7.361215932167728,1.75)-- (7.361215932167728,0.75);
        \draw [line width=1pt] (7.361215932167728,0.75)-- (6.495190528383289,0.25);
        \draw [line width=1pt] (6.495190528383289,0.25)-- (5.629165124598851,0.75);
        \draw [line width=1pt] (7.361215932167728,1.75)-- (8.227241335952167,2.25);
        \draw [line width=1pt] (8.227241335952167,2.25)-- (9.093266739736606,1.75);
        \draw [line width=1pt] (9.093266739736606,1.75)-- (9.093266739736606,0.75);
        \draw [line width=1pt] (9.093266739736606,0.75)-- (8.227241335952167,0.25);
        \draw [line width=1pt] (8.227241335952167,0.25)-- (7.361215932167728,0.75);
        \draw [line width=1pt] (9.093266739736606,1.75)-- (9.959292143521044,2.25);
        \draw [line width=1pt] (9.959292143521044,2.25)-- (10.825317547305483,1.75);
        \draw [line width=1pt] (10.825317547305483,1.75)-- (10.825317547305483,0.75);
        \draw [line width=1pt] (10.825317547305483,0.75)-- (9.959292143521044,0.25);
        \draw [line width=1pt] (9.959292143521044,0.25)-- (9.093266739736606,0.75);
        \draw [line width=1pt] (1.299038105676658,2.25)-- (1.299038105676658,3.25);
        \draw [line width=1pt] (1.299038105676658,3.25)-- (2.1650635094610964,3.75);
        \draw [line width=1pt] (2.1650635094610964,3.75)-- (3.031088913245535,3.25);
        \draw [line width=1pt] (3.031088913245535,3.25)-- (3.031088913245535,2.25);
        \draw [line width=1pt] (3.031088913245535,3.25)-- (3.8971143170299736,3.75);
        \draw [line width=1pt] (3.8971143170299736,3.75)-- (4.763139720814412,3.25);
        \draw [line width=1pt] (4.763139720814412,3.25)-- (4.763139720814412,2.25);
        \draw [line width=1pt] (4.763139720814412,3.25)-- (5.629165124598851,3.75);
        \draw [line width=1pt] (5.629165124598851,3.75)-- (6.495190528383289,3.25);
        \draw [line width=1pt] (6.495190528383289,3.25)-- (6.495190528383289,2.25);
        \draw [line width=1pt] (6.495190528383289,3.25)-- (7.361215932167728,3.75);
        \draw [line width=1pt] (7.361215932167728,3.75)-- (8.227241335952167,3.25);
        \draw [line width=1pt] (8.227241335952167,3.25)-- (8.227241335952167,2.25);
        \draw [line width=1pt] (8.227241335952167,3.25)-- (9.093266739736606,3.75);
        \draw [line width=1pt] (9.093266739736606,3.75)-- (9.959292143521044,3.25);
        \draw [line width=1pt] (9.959292143521044,3.25)-- (9.959292143521044,2.25);
        \draw [line width=1pt] (9.959292143521044,3.25)-- (10.825317547305483,3.75);
        \draw [line width=1pt] (10.825317547305483,3.75)-- (11.69134295108992,3.25);
        \draw [line width=1pt] (11.69134295108992,3.25)-- (11.69134295108992,2.25);
        \draw [line width=1pt] (11.69134295108992,2.25)-- (10.825317547305483,1.75);
        \draw [line width=1pt] (2.1650635094610964,3.75)-- (2.1650635094610964,4.75);
        \draw [line width=1pt] (2.1650635094610964,4.75)-- (3.031088913245535,5.25);
        \draw [line width=1pt] (3.031088913245535,5.25)-- (3.8971143170299736,4.75);
        \draw [line width=1pt] (3.8971143170299736,4.75)-- (3.8971143170299736,3.75);
        \draw [line width=1pt] (3.8971143170299736,4.75)-- (4.763139720814412,5.25);
        \draw [line width=1pt] (4.763139720814412,5.25)-- (5.629165124598851,4.75);
        \draw [line width=1pt] (5.629165124598851,4.75)-- (5.629165124598851,3.75);
        \draw [line width=1pt] (5.629165124598851,4.75)-- (6.495190528383289,5.25);
        \draw [line width=1pt] (6.495190528383289,5.25)-- (7.361215932167728,4.75);
        \draw [line width=1pt] (7.361215932167728,4.75)-- (7.361215932167728,3.75);
        \draw [line width=1pt] (7.361215932167728,4.75)-- (8.227241335952167,5.25);
        \draw [line width=1pt] (8.227241335952167,5.25)-- (9.093266739736606,4.75);
        \draw [line width=1pt] (9.093266739736606,4.75)-- (9.093266739736606,3.75);
        \draw [line width=1pt] (9.093266739736606,4.75)-- (9.959292143521044,5.25);
        \draw [line width=1pt] (9.959292143521044,5.25)-- (10.825317547305483,4.75);
        \draw [line width=1pt] (10.825317547305483,4.75)-- (10.825317547305483,3.75);
        \draw [line width=1pt] (10.825317547305483,4.75)-- (11.69134295108992,5.25);
        \draw [line width=1pt] (11.69134295108992,5.25)-- (12.55736835487436,4.75);
        \draw [line width=1pt] (12.55736835487436,4.75)-- (12.55736835487436,3.75);
        \draw [line width=1pt] (12.55736835487436,3.75)-- (11.69134295108992,3.25);
        \draw [line width=1pt] (3.031088913245535,5.25)-- (3.031088913245535,6.25);
        \draw [line width=1pt] (3.031088913245535,6.25)-- (3.8971143170299736,6.75);
        \draw [line width=1pt] (3.8971143170299736,6.75)-- (4.763139720814412,6.25);
        \draw [line width=1pt] (4.763139720814412,6.25)-- (4.763139720814412,5.25);
        \draw [line width=1pt] (4.763139720814412,6.25)-- (5.629165124598851,6.75);
        \draw [line width=1pt] (5.629165124598851,6.75)-- (6.495190528383289,6.25);
        \draw [line width=1pt] (6.495190528383289,6.25)-- (6.495190528383289,5.25);
        \draw [line width=1pt] (6.495190528383289,6.25)-- (7.361215932167728,6.75);
        \draw [line width=1pt] (7.361215932167728,6.75)-- (8.227241335952167,6.25);
        \draw [line width=1pt] (8.227241335952167,6.25)-- (8.227241335952167,5.25);
        \draw [line width=1pt] (8.227241335952167,6.25)-- (9.093266739736606,6.75);
        \draw [line width=1pt] (9.093266739736606,6.75)-- (9.959292143521044,6.25);
        \draw [line width=1pt] (9.959292143521044,6.25)-- (9.959292143521044,5.25);
        \draw [line width=1pt] (9.959292143521044,6.25)-- (10.825317547305483,6.75);
        \draw [line width=1pt] (10.825317547305483,6.75)-- (11.69134295108992,6.25);
        \draw [line width=1pt] (11.69134295108992,6.25)-- (11.69134295108992,5.25);
        \draw [line width=1pt] (11.69134295108992,6.25)-- (12.55736835487436,6.75);
        \draw [line width=1pt] (12.55736835487436,6.75)-- (13.423393758658799,6.25);
        \draw [line width=1pt] (13.423393758658799,6.25)-- (13.423393758658799,5.25);
        \draw [line width=1pt] (13.423393758658799,5.25)-- (12.55736835487436,4.75);
        
       \begin{scriptsize}
            \draw [fill=zzffff] (3.031088913245535,6.25) circle (8.5pt);
            \node at (3.025,6.25) {\medsize \textbf{1}};
            \draw [fill=zzffff] (2.1650635094610964,4.75) circle (8.5pt);
            \node at (2.17,4.77) {\medsize \textbf{2}};
            \draw [fill=zzffff] (1.299038105676658,3.25) circle (8.5pt);
            \node at (1.31,3.26) {\medsize \textbf{3}};
            \draw [fill=zzffff] (0.4330127018922193,1.75) circle (8.5pt);
            \node at (0.43,1.76) {\medsize \textbf{4}};
            \draw [fill=zzffff] (4.763139720814412,6.25) circle (8.5pt);
            \node at (4.76,6.25) {\medsize \textbf{5}};
            \draw [fill=zzffff] (3.8971143170299736,4.75) circle (8.5pt);
            \node at (3.90,4.77) {\medsize \textbf{6}};
            \draw [fill=zzffff] (3.031088913245535,3.25) circle (8.5pt);
            \node at (3.04,3.26) {\medsize \textbf{7}};
            \draw [fill=zzffff] (2.1650635094610964,1.75) circle (8.5pt);
            \node at (2.17,1.76) {\medsize \textbf{8}};
            \draw [fill=zzffff] (1.299038105676658,0.25) circle (8.5pt);
            \node at (1.31,0.26) {\medsize \textbf{9}};
            \draw [fill=zzffff] (6.495190528383289,6.25) circle (8.5pt);
            \node at (6.495,6.25) {\medsize \textbf{10}};
            \draw [fill=zzffff] (5.629165124598851,4.75) circle (8.5pt);
            \node at (5.63,4.77) {\medsize \textbf{11}};
            \draw [fill=zzffff] (4.763139720814412,3.25) circle (8.5pt);
            \node at (4.76,3.26) {\medsize \textbf{12}};
            \draw [fill=zzffff] (3.8971143170299736,1.75) circle (8.5pt);
            \node at (3.89,1.76) {\medsize \textbf{13}};
            \draw [fill=zzffff] (3.031088913245535,0.25) circle (8.5pt);
            \node at (3.03,0.27) {\medsize \textbf{14}};
            \draw [fill=zzffff] (8.227241335952167,6.25) circle (8.5pt);
            \node at (8.23,6.25) {\medsize \textbf{15}};
            \draw [fill=zzffff] (7.361215932167728,4.75) circle (8.5pt);
            \node at (7.36,4.77) {\medsize \textbf{16}};
            \draw [fill=zzffff] (6.495190528383289,3.25) circle (8.5pt);
            \node at (6.48,3.26) {\medsize \textbf{17}};
            \draw [fill=zzffff] (5.629165124598851,1.75) circle (8.5pt);
            \node at (5.63,1.76) {\medsize \textbf{18}};
            \draw [fill=zzffff] (4.763139720814412,0.25) circle (8.5pt);
            \node at (4.76,0.26) {\medsize \textbf{19}};
            \draw [fill=zzffff] (9.959292143521044,6.25) circle (8.5pt);
            \node at (9.965,6.25) {\medsize \textbf{20}};
            \draw [fill=zzffff] (9.093266739736606,4.75) circle (8.5pt);
            \node at (9.1,4.77) {\medsize \textbf{21}};
            \draw [fill=zzffff] (8.227241335952167,3.25) circle (8.5pt);
            \node at (8.22,3.26) {\medsize \textbf{22}};
            \draw [fill=zzffff] (7.361215932167728,1.75) circle (8.5pt);
            \node at (7.36,1.76) {\medsize \textbf{23}};
            \draw [fill=zzffff] (6.495190528383289,0.25) circle (8.5pt);
            \node at (6.49,0.26) {\medsize \textbf{24}};
            \draw [fill=zzffff] (11.69134295108992,6.25) circle (8.5pt);
            \node at (11.7,6.25) {\medsize \textbf{25}};
            \draw [fill=zzffff] (10.825317547305483,4.75) circle (8.5pt);
            \node at (10.83,4.77) {\medsize \textbf{26}};
            \draw [fill=zzffff] (9.959292143521044,3.25) circle (8.5pt);
            \node at (9.95,3.26) {\medsize \textbf{27}};
            \draw [fill=zzffff] (9.093266739736606,1.75) circle (8.5pt);
            \node at (9.1,1.76) {\medsize \textbf{28}};
            \draw [fill=zzffff] (8.227241335952167,0.25) circle (8.5pt);
            \node at (8.23,0.26) {\medsize \textbf{29}};
            \draw [fill=zzffff] (13.423393758658799,6.25) circle (8.5pt);
            \node at (13.43,6.25) {\medsize \textbf{30}};
            \draw [fill=zzffff] (12.55736835487436,4.75) circle (8.5pt);
            \node at (12.56,4.77) {\medsize \textbf{31}};
            \draw [fill=zzffff] (11.69134295108992,3.25) circle (8.5pt);
            \node at (11.69,3.26) {\medsize \textbf{32}};
            \draw [fill=zzffff] (10.825317547305483,1.75) circle (8.5pt);
            \node at (10.83,1.76) {\medsize \textbf{33}};
            \draw [fill=zzffff] (9.959292143521044,0.25) circle (8.5pt);
            \node at (9.96,0.26) {\medsize \textbf{34}};
            \draw [fill=zzffzz] (3.8971143170299736,6.75) circle (8.5pt);
            \node at (3.9,6.76) {\medsize \textbf{35}};
            \draw [fill=zzffzz] (3.031088913245535,5.25) circle (8.5pt);
            \node at (3.03,5.26) {\medsize \textbf{36}};
            \draw [fill=zzffzz] (2.1650635094610964,3.75) circle (8.5pt);
            \node at (2.16,3.76) {\medsize \textbf{37}};
            \draw [fill=zzffzz] (1.299038105676658,2.25) circle (8.5pt);
            \node at (1.30,2.26) {\medsize \textbf{38}};
            \draw [fill=zzffzz] (0.4330127018922193,0.75) circle (8.5pt);
            \node at (0.432,0.76) {\medsize \textbf{39}};            
            \draw [fill=zzffzz] (5.629165124598851,6.75) circle (8.5pt);
            \node at (5.63,6.76) {\medsize \textbf{40}};
            \draw [fill=zzffzz] (4.763139720814412,5.25) circle (8.5pt);
            \node at (4.76,5.26) {\medsize \textbf{41}};
            \draw [fill=zzffzz] (3.8971143170299736,3.75) circle (8.5pt);
            \node at (3.89,3.76) {\medsize \textbf{42}};
            \draw [fill=zzffzz] (3.031088913245535,2.25) circle (8.5pt);
            \node at (3.03,2.26) {\medsize \textbf{43}};
            \draw [fill=zzffzz] (2.1650635094610964,0.75) circle (8.5pt);
            \node at (2.16,0.76) {\medsize \textbf{44}};
            \draw [fill=zzffzz] (7.361215932167728,6.75) circle (8.5pt);
            \node at (7.36,6.76) {\medsize \textbf{45}};
            \draw [fill=zzffzz] (6.495190528383289,5.25) circle (8.5pt);
            \node at (6.49,5.26) {\medsize \textbf{46}};
            \draw [fill=zzffzz] (5.629165124598851,3.75) circle (8.5pt);
            \node at (5.62,3.76) {\medsize \textbf{47}};
            \draw [fill=zzffzz] (4.763139720814412,2.25) circle (8.5pt);
            \node at (4.76,2.26) {\medsize \textbf{48}};
            \draw [fill=zzffzz] (3.8971143170299736,0.75) circle (8.5pt);
            \node at (3.89,0.76) {\medsize \textbf{49}};
            \draw [fill=zzffzz] (9.093266739736606,6.75) circle (8.5pt);
            \node at (9.1,6.76) {\medsize \textbf{50}};
            \draw [fill=zzffzz] (8.227241335952167,5.25) circle (8.5pt);
            \node at (8.23,5.26) {\medsize \textbf{51}};
            \draw [fill=zzffzz] (7.361215932167728,3.75) circle (8.5pt);
            \node at (7.36,3.76) {\medsize \textbf{52}};
            \draw [fill=zzffzz] (6.495190528383289,2.25) circle (8.5pt);
            \node at (6.49,2.26) {\medsize \textbf{53}};
            \draw [fill=zzffzz] (5.629165124598851,0.75) circle (8.5pt);
            \node at (5.62,0.76) {\medsize \textbf{54}};
            \draw [fill=zzffzz] (10.825317547305483,6.75) circle (8.5pt);
            \node at (10.83,6.76) {\medsize \textbf{55}};
            \draw [fill=zzffzz] (9.959292143521044,5.25) circle (8.5pt);
            \node at (9.96,5.26) {\medsize \textbf{56}};
            \draw [fill=zzffzz] (9.093266739736606,3.75) circle (8.5pt);
            \node at (9.09,3.76) {\medsize \textbf{57}};
            \draw [fill=zzffzz] (8.227241335952167,2.25) circle (8.5pt);
            \node at (8.23,2.26) {\medsize \textbf{58}};
            \draw [fill=zzffzz] (7.361215932167728,0.75) circle (8.5pt);
            \node at (7.36,0.76) {\medsize \textbf{59}};
            \draw [fill=zzffzz] (12.55736835487436,6.75) circle (8.5pt);
            \node at (12.56,6.76) {\medsize \textbf{60}};
            \draw [fill=zzffzz] (11.69134295108992,5.25) circle (8.5pt);
            \node at (11.69,5.26) {\medsize \textbf{61}};
            \draw [fill=zzffzz] (10.825317547305483,3.75) circle (8.5pt);
            \node at (10.82,3.76) {\medsize \textbf{62}};
            \draw [fill=zzffzz] (9.959292143521044,2.25) circle (8.5pt);
            \node at (9.96,2.26) {\medsize \textbf{63}};
            \draw [fill=zzffzz] (9.093266739736606,0.75) circle (8.5pt);
            \node at (9.09,0.76) {\medsize \textbf{64}};
            \draw [fill=zzffzz] (13.423393758658799,5.25) circle (8.5pt);
            \node at (13.43,5.26) {\medsize \textbf{65}};
            \draw [fill=zzffzz] (12.55736835487436,3.75) circle (8.5pt);
            \node at (12.56,3.76) {\medsize \textbf{66}};
            \draw [fill=zzffzz] (11.69134295108992,2.25) circle (8.5pt);
            \node at (11.69,2.26) {\medsize \textbf{67}};
            \draw [fill=zzffzz] (10.825317547305483,0.75) circle (8.5pt);
            \node at (10.82,0.76) {\medsize \textbf{68}};
        \end{scriptsize}
    \end{tikzpicture}
    \caption{The graph $H_{1 \times 4 \times 6}$.}
    \label{fig: graph of H(1_4_6)}
\end{figure}

Before proceeding further, we fix some notations and carefully list some important observations that arise directly from the construction of $H_{1 \times m \times n}$. For $m,n \geq 1$, let $V\lno H_{1 \times m \times n} \rno = V_1 \sqcup V_2$, where, 
\begin{align*}
    V_1 &= \lsq m+n+mn \rsq;\\
    V_2 &= \lsq m+n+mn+1,\ 2m+2n+2mn\rsq.
\end{align*}

For instance, in \Cref{fig: graph of H(1_4_6)}, the blue coloured vertices belong to $V_1$ and the green coloured vertices belong to $V_2$.

We say that a subset $A \subseteq V\lno H_{1 \times m \times n} \rno$ is \textit{connected} if the induced subgraph $H_{1 \times m \times n}[A]$ is connected. It is easy to see that a connected subset of size $3$ in $V\lno H_{1 \times m \times n} \rno$ is always a $P_3$ (path graph on three vertices). That is, if $\lcu a,b,c\rcu \subseteq V\lno H_{1 \times m \times n} \rno$ is connected, then $a$ and $c$ are the \textit{endpoints} (end vertices) of $P_3$, each adjacent to $b$, the \textit{mid-point} (vertex in between the endpoints). Also, we will say that $\lcu a,b,c\rcu$ induces a $P_3$ in $H_{1 \times m \times n}$.

\begin{obs}\label{Observation 1}
    Let $\lcu a,b,c \rcu$ induces a $P_3$ in $H_{1 \times m \times n}$ with endpoints $a$ and $c$, then exactly on of the following hold:
    \begin{enumerate}[label = \roman*.]
        \item $a,c \in V_1$ and $b \in V_2$ implying $a<c<b$; or,
        \item $a,c \in V_2$ and $b \in V_1$ implying $b<a<c$.
    \end{enumerate}

    This means both $a$ and $c$ will either be in $V_1$ or $V_2$, but not one in each, and as a result, $b$ will be in $V_2$ or $V_1$, respectively.
\end{obs}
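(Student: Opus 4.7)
The plan is to derive the observation from two structural facts about the edge sets $\mathcal{E}_1,\mathcal{E}_2,\mathcal{E}_3,\mathcal{E}_4$: namely, (a) the graph $H_{1\times m\times n}$ is bipartite with bipartition $V_1\sqcup V_2$, and (b) for every edge $uv$, the endpoint lying in $V_1$ carries a strictly smaller label than the endpoint lying in $V_2$. Once these two facts are in hand, the dichotomy and the claimed inequalities fall out immediately.

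To establish (a) and (b) simultaneously, I would inspect each of the four edge sets and verify that every edge has the form $(i)(i+d)$ with $i\in V_1=[m+n+mn]$ and $i+d\in V_2=[m+n+mn+1,\,2m+2n+2mn]$, where the shift $d\in\{m+n+mn,\ n+nm,\ m+n+mn+1,\ m+n+mn+2\}$ is strictly positive. In each of $\mathcal{E}_1,\mathcal{E}_2,\mathcal{E}_3,\mathcal{E}_4$, the prescribed range of $i$ is a subset of $[m+n+mn]$, and one checks that $i+d$ lies in the range $[m+n+mn+1,\,2m+2n+2mn]$ by a short arithmetic comparison using the upper and lower bounds on $i$. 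This part is routine but is the main technical burden, since the four ranges (particularly in $\mathcal{E}_3$) are slightly awkward and one must be careful not to miss a boundary index.

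With (a) and (b) established, the proof of the observation is essentially automatic. Since $\{a,b,c\}$ induces a $P_3$ with midpoint $b$, the edges $ab$ and $bc$ are both in $E(H_{1\times m\times n})$. By (a), $b$ belongs to one of $V_1,V_2$ and both $a,c$ belong to the other, so exactly one of (i), (ii) occurs. In case (i), with $b\in V_2$ and $a,c\in V_1$, property (b) applied to the edges $ab$ and $cb$ gives $a<b$ and $c<b$; labelling the endpoints so that $a<c$ yields $a<c<b$. In case (ii), with $b\in V_1$ and $a,c\in V_2$, property (b) applied to $ba$ and $bc$ gives $b<a$ and $b<c$; again assuming $a<c$ yields $b<a<c$.

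The main obstacle I anticipate is purely bookkeeping: confirming for every index $i$ in each of the four declared ranges that $i\in V_1$ and $i+d\in V_2$, without relying on any geometric picture. Once this case analysis is carried out, the bipartite argument is short and the inequalities follow from (b) together with the convention that $a<c$ among the two endpoints.
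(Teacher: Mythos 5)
Your proposal is correct, and it supplies the verification that the paper omits: Observation~1 is stated without proof, as something that ``arises directly from the construction,'' so there is no paper argument to compare against. Your bipartite-plus-label-ordering route is the natural one and is sound: inspecting $\mathcal{E}_1,\dots,\mathcal{E}_4$ does show that every edge joins an $i\in V_1=[m+n+mn]$ to an $i+d\in V_2=[m+n+mn+1,\,2m+2n+2mn]$, so the graph is bipartite on $V_1\sqcup V_2$, and the $P_3$ dichotomy with the stated inequalities then falls out once one fixes the labelling convention $a<c$ between the two endpoints. One small simplification: your step (b) is automatic once (a) is known, because $V_1$ and $V_2$ are by definition the two consecutive integer intervals $[1,m+n+mn]$ and $[m+n+mn+1,\,2m+2n+2mn]$, so every label in $V_1$ is strictly less than every label in $V_2$ regardless of which edge is being considered. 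You therefore do not need to argue separately that each shift $d$ is positive; the only real work is the bookkeeping for (a), confirming that in each $\mathcal{E}_\ell$ the smaller index stays in $[m+n+mn]$ and the larger lands in $[m+n+mn+1,\,2m+2n+2mn]$.
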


\begin{obs}\label{Observation 2}
    If $\lcu a,b,c \rcu$ induces $P_3$ in $H_{1 \times m \times n}$ with endpoints $a$ and $c$, then these endpoints are unique; that is, no other induced $P_3$ in $H_{1 \times m \times n}$ has $a$ and $c$ as its endpoints.
\end{obs}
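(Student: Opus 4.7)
The plan is to argue by contradiction. Suppose there exist distinct vertices $b, b' \in V\lno H_{1\times m\times n} \rno$ such that both $\lcu a,b,c \rcu$ and $\lcu a,b',c \rcu$ induce a $P_3$ with endpoints $a$ and $c$. Then $a$ and $c$ have the two common neighbors $b, b'$, yet $a$ and $c$ are themselves non-adjacent, so $a$-$b$-$c$-$b'$-$a$ would be a $4$-cycle in $H_{1\times m\times n}$. The task therefore reduces to showing that $H_{1\times m\times n}$ contains no $4$-cycle.

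By \Cref{Observation 1} and the fact that every edge in $\mathcal{E}_1 \sqcup \mathcal{E}_2 \sqcup \mathcal{E}_3 \sqcup \mathcal{E}_4$ joins a vertex in $V_1$ to a vertex in $V_2$, I may assume without loss of generality that $a,c \in V_1$ and $b,b' \in V_2$. Writing $N := m+n+mn$, the four edge classes correspond to the shifts $N-m$, $N$, $N+1$, and $N+2$ respectively; that is, every neighbor in $V_2$ of a vertex $v \in V_1$ is of the form $v + s$ for some $s \in \{N-m,\, N,\, N+1,\, N+2\}$, the specific admissible shifts being governed by the index ranges appearing in the definitions of $\mathcal{E}_1,\ldots,\mathcal{E}_4$. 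Writing $b = a + s_1 = c + t_1$ and $b' = a + s_2 = c + t_2$ with $s_1 \neq s_2$, one obtains the key relation
\[
c - a \;=\; s_1 - t_1 \;=\; s_2 - t_2.
\]

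A direct enumeration of the differences $s-t$ over $s,t \in \{N-m,\,N,\,N+1,\,N+2\}$ shows that the only positive value achieved in more than one way is $1$, realized exactly by $(s,t) = (N+1,N)$ and $(s,t) = (N+2,N+1)$. Hence, after possibly interchanging $a$ and $c$, the only surviving configuration is $c = a+1$ with $b = a+N+1$ and $b' = a+N+2$, forcing the four edges
\[
(a,\,a+N+1),\quad (a,\,a+N+2),\quad (a+1,\,a+N+1),\quad (a+1,\,a+N+2)
\]
to be simultaneously present in $\mathcal{E}_3,\,\mathcal{E}_4,\,\mathcal{E}_1 \text{ (or } \mathcal{E}_2),\text{ and } \mathcal{E}_3$ respectively.

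The final step is to rule out this simultaneous presence by inspecting the index-range constraints. I expect this boundary bookkeeping to be the main obstacle: one must check that no single value of $a \in V_1$ lies in all four admissible ranges at once, and the ranges differ depending on whether $a$ sits in the top strip $[m]$, the bottom strip $[1+n+nm,\,m+n+nm]$, or an interior row. However, once the abstract enumeration above has pinned the offending configuration to the unique local pattern $(a,\,a+1,\,a+N+1,\,a+N+2)$, the remaining verification amounts to a short arithmetic check that the required membership conditions are mutually incompatible for every admissible $a$, completing the contradiction and proving the uniqueness of the endpoints.
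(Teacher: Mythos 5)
Your reduction to the absence of $4$-cycles is the right idea---if two distinct vertices $b,b'$ were both adjacent to nonadjacent $a,c$, then $a\,b\,c\,b'$ would be a $4$-cycle, and the hexagonal grid has girth $6$. The paper itself states the observation without proof, so there is no official argument to compare against; yours is a legitimate way to formalize the geometric fact. However, as written the argument has two genuine gaps.

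First, the central enumeration claim is false for $m \le 2$. You assert that over $s,t \in \{N-m,\,N,\,N+1,\,N+2\}$ (with $N = m+n+mn$) the only positive difference $s-t$ attained more than once is $1$, via $(N+1,N)$ and $(N+2,N+1)$. When $m=1$ the value $1$ is also attained by $(N,N-1)$, i.e.\ three times, and $2$ is attained twice; when $m=2$ the value $2$ is attained by both $(N+2,N)$ and $(N,N-2)$. So for $m\le 2$ there are additional candidate $4$-cycle configurations that you never identify, let alone rule out, and the conclusion that the ``unique local pattern'' is $(a,\,a+1,\,a+N+1,\,a+N+2)$ does not follow.

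Second, even for the one configuration you do isolate, the final step---showing that no admissible $a$ simultaneously satisfies the index-range constraints of $\mathcal{E}_1,\dots,\mathcal{E}_4$---is only announced, not carried out. The check is short and does succeed (for example, $(a,\,a+N+2)\in\mathcal{E}_4$ forces $m+1\le a\le n+nm-2$, which makes $a+1$ fall outside the range $[m]\cup[1+n+nm,\,m+n+nm]$ required for $(a+1,\,a+N+1)\in\mathcal{E}_1$), but it needs to be written, and analogous checks need to be written for the extra configurations that appear when $m=1$ or $m=2$. As a small matter of bookkeeping, note also that $\mathcal{E}_1$ has shift $N$ and $\mathcal{E}_2$ has shift $N-m$, not the other way around, so the parenthetical ``(or $\mathcal{E}_2$)'' for the shift-$N$ edge is incorrect. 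In short: correct strategy, but the enumeration must be redone to cover $m\le 2$ and the range-incompatibility verifications must actually be supplied before this counts as a proof.
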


\begin{obs}\label{obs: V1 nbhd c}
    If $v \in V_1$ then $N(v) \subset V_2$. Similarly, if $v \in V_2$ then $N(v) \subset V_1$. Recall that $N(v)$ is the open neighborhood of the vertex $v$.
\end{obs}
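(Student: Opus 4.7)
The plan is to prove the stronger statement that $H_{1\times m\times n}$ is bipartite with bipartition $(V_1, V_2)$; the observation then follows immediately, since in a bipartite graph every vertex's open neighborhood lies entirely in the opposite part. To establish this bipartite structure, I would verify directly from the edge definition that every edge of $H_{1\times m\times n}$ has exactly one endpoint in $V_1$ and one in $V_2$.

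The verification is a straightforward case analysis on the four families $\mathcal{E}_1, \mathcal{E}_2, \mathcal{E}_3, \mathcal{E}_4$. Each family consists of edges of the form $\{(i),\,(i+s_\ell)\}$, where $s_\ell$ is a fixed positive shift, namely $m+n+mn$, $n+nm$, $m+n+mn+1$, and $m+n+mn+2$ respectively, and $i$ ranges over a specified subset of $[m+n+mn]$. The smaller endpoint $i$ therefore lies in $V_1 = [m+n+mn]$ automatically. What remains is to confirm that $i+s_\ell \in V_2 = [m+n+mn+1,\ 2m+2n+2mn]$ in each case. For the lower bound, the admissible values of $i$ in each $\mathcal{E}_\ell$ are designed precisely so that $i+s_\ell \geq m+n+mn+1$; for the upper bound, the same ranges force $i+s_\ell \leq 2m+2n+2mn$. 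Both inequalities reduce to routine arithmetic after plugging the stated ranges of $i$ into the relevant shift.

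The main obstacle is purely notational: in $\mathcal{E}_1$ and $\mathcal{E}_3$ the admissible values of $i$ are presented as unions of subintervals of $[m+n+mn]$, and each subinterval must be examined separately to confirm both bounds. Once all four families are verified, every edge of $H_{1\times m\times n}$ crosses the partition $V_1 \sqcup V_2$, so no vertex in $V_1$ can have a neighbor in $V_1$, and symmetrically for $V_2$. This establishes the observation, and moreover gives a clean structural reason for the ordering constraints recorded in \Cref{Observation 1}.
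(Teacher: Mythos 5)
Your proof is correct and takes exactly the route the paper implicitly relies on: the observation is stated without proof, and the intended justification is precisely the direct inspection of the four edge families showing that each edge $\{i,\,i+s_\ell\}$ has $i\in V_1$ and $i+s_\ell\in V_2$. One small terminological note: bipartiteness of $H_{1\times m\times n}$ with the specific bipartition $(V_1,V_2)$ is not strictly stronger than \Cref{obs: V1 nbhd c} but equivalent to it, since $V_1\sqcup V_2$ exhausts the vertex set; your verification establishes both simultaneously.
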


\subsection{Topological preliminaries}

We follow Hatcher’s \textit{Algebraic Topology}~\cite{hatcher2005algebraic} and Kozlov’s \textit{Combinatorial Algebraic Topology}~\cite{kozlov2008combinatorial} as our primary references for standard definitions and terminology.

A \textit{simplicial complex} $\mathcal{K}$ is a non-empty family of subsets of a finite set $V$, referred to as the vertex set, with the property that it is closed under taking subsets, that is, for every $\tau \in \mathcal{K}$ and $\sigma \subset \tau$, we have $\sigma \in \mathcal{K}$. We call an element $\sigma \in K$ a \textit{face} of $K$. A face consisting of $k+1$ vertices is called a $k$-dimensional face, and the maximum dimension among all faces is called the dimension of the simplicial complex. The maximal faces of a simplicial complex, that is, those faces not properly contained in any other face, are called \emph{facets}. 

Since simplicial complexes are closed under taking subsets, it suffices to define their facets, because all other faces are obtained by taking subsets of these. Thus, if a simplicial complex $\mathcal{K}$ has facets $F_1, F_2, \dots, F_t$, we say that $\mathcal{K}$ is generated by these facets, and we write $$\mathcal{K} = \left \langle F_1, F_2, \dots, F_t \right \rangle.$$ 

We now recall the definition of the $k$-cut complex.

\begin{definition}[\cite{bayer2024cut}, Definition~2.7]\label{def:cutcomplex}
    Let $G = (V, E)$ be a graph and let $k \ge 1$. 
    The \textbf{\emph{$k$-cut complex}} of $G$, denoted by $\kcut \lno G \rno$, is the simplicial complex whose facets are the complements of subsets of $V(G)$ of size $k$ whose induced subgraphs are disconnected. Equivalently, a subset $\sigma \subseteq V(G)$ is a face of $\kcut\lno G\rno$ if and only if its complement $V(G) \setminus \sigma$ induces a disconnected subgraph on $k$ vertices.
    $$ \Delta_k \lno G \rno = \left \langle \sigma \in V(G)\ \middle|\ G\lsq V(G) \setminus \sigma \rsq \text{ is disconnected and } \left| V(G) \setminus \sigma \right| = k \right \rangle.$$
\end{definition}

\subsubsection{Shellable simplicial complexes and spanning facets}

A crucial combinatorial concept that reveals important topological information about a simplicial complex is shellability. We define this as follows,

\begin{definition}[\cite{kozlov2008combinatorial}, Defintion~12.1]\label{def:shellable}
    A simplicial complex $\mathcal{K}$ is said to be \textbf{\textit{shellable}} if the facets of $\mathcal{K}$ can be put together in a linear order $F_1, F_2, \dots, F_t$ such that the subcomplex $$\lno \bigcup_{s=1}^{j-1}\lno F_s \rno \rno \cap \lno F_j \rno$$ is pure and of dimension $\lno \dim \lno F_j \rno -1 \rno$ for each $2 \leq j \leq t$. Such an ordering on $\mathcal{K}$ is called a \textit{\textbf{shelling order}}.
\end{definition}   

The definition of shellability mentioned above is commonly used. We now discuss an alternative method for determining a shelling order, which will be useful in many cases. A simplicial complex \(\Delta\) has a shelling order \(F_1, F_2, \dots, F_t\) of its facets if and only if for any \(i, j\) satisfying \(1 \leq i < j \leq t\), there exists \(1 \leq r < j\) such that
\begin{align}\label{shellability condition}
 \tag{$*$}
    F_r \cap F_j = F_j \setminus \{\lambda\}, \quad \text{for some } \lambda \in F_j \setminus F_i.
\end{align}

A facet \(F_k\) \((1 < k \leq t)\) is called \emph{\textbf{spanning}} with respect to the given shelling order if the \textit{boundary} of $F_k$ is contained in union of facets coming before $F_k$ in shelling order, that is, $$\partial(F_k) \subseteq \bigcup_{i=1}^{k-1} F_i.$$

Here, $\partial(F_k)$ denotes the boundary of $F_k$. To fix notation, recall that a facet $f$
of a simplicial complex is a $d$-simplex, where $|f| = d+1$. We now define the boundary of a simplex.

\begin{definition}[Boundary of a simplex]
    Let $\sigma = \{v_0, v_1, \dots, v_n\}$ be an $n$-simplex. The \textit{boundary} of $\sigma$, denoted by $\partial \sigma$, is the simplicial complex consisting of all proper faces of $\sigma$, that is, $$\partial \sigma = \{ \tau \subsetneq \sigma \}.$$
\end{definition}

It is easy to check that \(F_k\) is a spanning facet if for each \(\lambda \in F_k\), there exists \(r < k\) such that
\begin{align}\label{spanning facets condition}
    F_r \cap F_k = F_k \setminus \{\lambda\}. \tag{$\#$}
\end{align}

\begin{theorem}[{\cite[Theorem 12.3]{kozlov2008combinatorial}}]\label{lemma: homotopy type using shellability}
Let \(\Delta\) be a pure shellable simplicial complex of dimension \(d\). Then \(\Delta\) has the homotopy type of a wedge of \(\beta\) spheres of dimension \(d\), where \(\beta\) is the number of spanning facets in a given shelling order. Hence,
\[
\Delta \simeq \bigvee_{\beta} S^d.
\]
\end{theorem}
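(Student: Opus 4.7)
The plan is to induct on the length $j$ of the filtration $\Delta_j := F_1 \cup F_2 \cup \cdots \cup F_j$, proving that $\Delta_j \simeq \bigvee_{\beta_j} \bbS^d$, where $\beta_j$ counts the spanning facets among $F_1, \dots, F_j$. The base case is immediate: $\Delta_1 = F_1$ is a single $d$-simplex, hence contractible, and $F_1$ is never spanning, so $\beta_1 = 0$.

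For the inductive step, view $\Delta_j$ as the pushout of $\Delta_{j-1} \leftarrow L_j \to F_j$, where $L_j := \Delta_{j-1} \cap F_j$. By the shellability hypothesis, $L_j$ is pure of dimension $d-1$, and its facets are precisely those codim-$1$ faces $F_j \setminus \lcu v \rcu$ already contained in $\Delta_{j-1}$. From criterion $(\#)$, $F_j$ is spanning exactly when every such codim-$1$ face lies in $\Delta_{j-1}$; equivalently, $L_j = \partial F_j$.

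In the non-spanning case, there exists a vertex $v^\star \in F_j$ with $F_j \setminus \lcu v^\star \rcu \notin \Delta_{j-1}$. Any face $G \in L_j$ sits inside some facet $F_j \setminus \lcu v \rcu$ of $L_j$, which necessarily satisfies $v \neq v^\star$, so $G \cup \lcu v^\star \rcu \subseteq F_j \setminus \lcu v \rcu$, and hence $G \cup \lcu v^\star \rcu$ is again a face of $L_j$. This exhibits $L_j$ as a cone with apex $v^\star$, and therefore contractible. Since $L_j \hookrightarrow F_j$ is a cofibration between contractible spaces, the pushout $\Delta_{j-1} \cup_{L_j} F_j$ is homotopy equivalent to $\Delta_{j-1}$, so $\Delta_j \simeq \Delta_{j-1}$ and $\beta_j = \beta_{j-1}$, as required.

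In the spanning case, $L_j = \partial F_j \cong \bbS^{d-1}$, so $\Delta_j$ is obtained from $\Delta_{j-1}$ by attaching a single $d$-cell along its boundary sphere. The inductive hypothesis gives $\Delta_{j-1} \simeq \bigvee_{\beta_{j-1}} \bbS^d$, which is $(d-1)$-connected, so the attaching map is null-homotopic and $\Delta_j \simeq \Delta_{j-1} \vee \bbS^d \simeq \bigvee_{\beta_{j-1}+1} \bbS^d$. The main subtlety lies in this last step: the null-homotopy of the attaching map rests on the $(d-1)$-connectivity of a wedge of $d$-spheres, which is clear for $d = 1$ (path-connectedness) and follows from Hurewicz for $d \geq 2$. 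The non-spanning case, by contrast, is dispatched purely combinatorially through the cone construction and the pushout-of-contractibles principle, making it the cleaner half of the argument.
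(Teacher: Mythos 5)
The paper does not supply its own proof of this theorem; it is invoked as a black-box citation to Kozlov's book. Your argument is a correct and essentially standard proof of that result (it is in substance the Björner-style facet-by-facet CW attachment argument), so I will assess it on its own merits.

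The proof is sound. The key steps all check out: the shelling hypothesis forces $L_j := \Delta_{j-1} \cap F_j$ to be pure of dimension $d-1$ with facets of the form $F_j \setminus \{v\}$; the criterion $(\#)$ translates exactly into ``$F_j$ spanning iff $L_j = \partial F_j$''; in the non-spanning case your cone argument is correct (every face $G$ of $L_j$ lies in some $F_j\setminus\{v\}$ with $v\neq v^\star$, so $G\cup\{v^\star\}$ is still a face, exhibiting $v^\star$ as a cone point), and gluing a contractible simplex along a contractible subcomplex changes nothing up to homotopy; in the spanning case the attaching map $\partial F_j \cong \mathbb{S}^{d-1} \to \Delta_{j-1}$ is null-homotopic because $\Delta_{j-1} \simeq \bigvee \mathbb{S}^d$ is $(d-1)$-connected, which you correctly justify via Hurewicz for $d\ge 2$ and path-connectedness for $d=1$. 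Two small points worth making explicit: for the non-spanning pushout step you should note that $F_j \notin \Delta_{j-1}$ (else $F_j$ would not be a facet of $\Delta$), so $L_j$ is genuinely a proper subcomplex of $\partial F_j$; and the base case $\beta_{j-1}=0$ (a point is $(d-1)$-connected for all $d$) deserves a sentence so that the induction starts cleanly. Neither is a gap, just polish.
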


\section{Shellability of 3-cut complex of hexagonal grid graph}\label{section: 3 (Shellability of 3-cut complex of hexagonal grid graph)}

In this section, we prove that the 3-cut complex of a hexagonal tiling $H_{1\times  m\times n}$, that is, $\Delta_3\left(H_{1\times  m\times n}\right)$, is a shellable simplicial complex. For this, our objective is to define an order on the facets of $\Delta_3\left(H_{1\times  m\times n}\right)$ and show that it is a shelling order. Before that, we need to specify the ordering on the vertices, and for this graph, we will follow the lexicographic ordering of the vertices of $H_{1\times  m\times n}$.

Before explicitly defining a shelling order on the facets of $\Delta_3\left(H_{1\times  m\times n}\right)$, let us find out the number of facets.

\subsection{Number of facets in $\Delta_3\left(H_{1\times  m\times n}\right)$}

Let $\eta_{m,n}$ be the number of facets of $\Delta_3\left( H_{1\times m \times n} \right)$. Precisely speaking, $\eta_{m,n}$ is the number of size-3 subsets of the set $[2m+2n+2mn]$ excluding those that induce $P_3$. Note that there are $\binom{2m+2n+2mn}{3}$ subsets of size 3 in $[2m+2n+2mn]$. We state the following lemma to find the number of induced $P_3$ in $H_{1\times  m\times n}$.

\begin{lemma}\label{lemma: number of induced P3 in hexagonal grid graph}
    For $m,n \geq 1$, let $\delta_{m,n}$ be the number of induced $P_3$ in $H_{1\times  m\times n}$. Then, $$\delta_{m,n} = 6mn+2m+2n-4.$$
\end{lemma}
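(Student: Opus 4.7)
The plan is to count induced copies of $P_3$ by summing contributions over midpoints, exploiting the bipartite structure of the hexagonal grid graph. By Observation 3, the partition $V(H_{1\times m\times n})=V_1\sqcup V_2$ is a bipartition, so $H_{1\times m\times n}$ is bipartite and hence triangle-free. Consequently, any length-two walk $a\sim b\sim c$ with $a\neq c$ automatically gives an induced $P_3$: the endpoints $a,c$ lie in the same part and so are non-adjacent. Since the midpoint $b$ is determined by the triple, each induced $P_3$ is counted exactly once when indexed by its midpoint, yielding
\[
\delta_{m,n}\;=\;\sum_{v\in V(H_{1\times m\times n})}\binom{d(v)}{2}.
\]

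From the edge-set description $\mathcal{E}_1\sqcup\mathcal{E}_2\sqcup\mathcal{E}_3\sqcup\mathcal{E}_4$, I will first verify that every vertex of $H_{1\times m\times n}$ has degree exactly $2$ or $3$. This is a direct inspection: for each vertex $i\in[2m+2n+2mn]$, one counts how many of the four index ranges contain either $i$ (as the left endpoint of a listed edge) or the appropriate back-shift of $i$ (as the right endpoint). Let $n_2$ and $n_3$ denote the numbers of degree-$2$ and degree-$3$ vertices respectively. Summing $|\mathcal{E}_1|+|\mathcal{E}_2|+|\mathcal{E}_3|+|\mathcal{E}_4|$ from the stated ranges gives the total edge count $|E(H_{1\times m\times n})|=3mn+2m+2n-1$, so the linear system
\[
n_2+n_3 \;=\; 2mn+2m+2n, \qquad 2n_2+3n_3 \;=\; 2|E| \;=\; 6mn+4m+4n-2
\]
is uniquely solved by $n_2=2m+2n+2$ and $n_3=2mn-2$. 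Substituting into the midpoint sum,
\[
\delta_{m,n}\;=\;\binom{2}{2}n_2+\binom{3}{2}n_3\;=\;(2m+2n+2)+3(2mn-2)\;=\;6mn+2m+2n-4,
\]
which is the claimed formula.

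The main obstacle is the bookkeeping needed to confirm that every vertex has degree at most $3$ and to pin down exactly which $2m+2n+2$ vertices have degree $2$; geometrically these are the outer corners of the hexagonal tiling (the top and bottom caps of each column, together with the leftmost and rightmost caps of the middle strip). The case analysis splits on whether $i\in V_1$ or $V_2$ and on whether $i$ lies on the outer boundary, and must be reconciled with the ranges appearing in $\mathcal{E}_1,\dots,\mathcal{E}_4$. One could alternatively avoid any edge-counting by an induction on $n$, observing that attaching a fresh column of $m$ hexagons to $H_{1\times m\times n}$ adjoins exactly $6m+2$ new induced $P_3$'s; this matches the increment $\delta_{m,n+1}-\delta_{m,n}=6m+2$ predicted by the formula and may be cleaner bookkeeping than the direct degree count.
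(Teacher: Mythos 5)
Your argument takes essentially the same route as the paper: count induced $P_3$'s by summing $\binom{d(v)}{2}$ over the midpoints, noting that bipartiteness (the paper's Observation 2.3) makes every length-two walk an induced $P_3$. The one genuine improvement is that you \emph{derive} the degree counts $n_2=2m+2n+2$ and $n_3=2mn-2$ from the handshake lemma rather than asserting them, and this quietly corrects an apparent typo in the paper, which states there are ``$2m+2n$'' vertices of degree two (inconsistent with $|V|=2m+2n+2mn$) while nonetheless using the correct contribution $2m+2n+2$ in the final sum. One caution: your edge count $|E|=3mn+2m+2n-1$ is correct, but the ranges in the paper's edge-set description $\mathcal{E}_1,\dots,\mathcal{E}_4$ are typographically ambiguous (in particular the $\mathcal{E}_3$ range), so ``summing the stated ranges'' is not quite a turnkey verification; a cleaner route, which you already gesture at, is Euler's formula $|E|=|V|+|F|-2$ with $|F|=mn+1$, or the induction on $n$ you sketch at the end.
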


\begin{proof}
    Observe that there are exactly $2mn - 2$ vertices of degree three (the vertices in the interior), each contributing to three induced $P_3$, and exactly $2m + 2n$ vertices of degree two (the vertices on the boundary), contributing to $2m + 2n + 2$ induced $P_3$'s. Thus, the total number of induced $P_3$ in $H_{1 \times m \times n}$ is $$ 3(2mn - 2) + (2m + 2n + 2) = 6mn + 2m + 2n - 4.$$
    
    This completes the proof of \Cref{lemma: number of induced P3 in hexagonal grid graph}.
\end{proof}

Hence, $$\eta_{m,n} = \binom{2m+2n+2mn}{3} - (6mn+2m+2n-4).$$ 

\subsection{Shelling order for the facets of $\Delta_3\left(H_{1\times  m\times n}\right)$}


We now give the idea for constructing an ordering to the facets of $\Delta_3\left(H_{1\times m \times n}\right)$, which we will prove is indeed a shelling ordering. First, arrange the facets of $\Delta_3 \left(H_{1\times m\times n}\right)$ in \textit{reverse lexicographic order}, then remove certain facets during the process and append them to the end of the ordering in the order in which they were removed. 

Let ``$\precc$" and ``$\succc$" denote the reverse and general lexicographic ordering, respectively.
Let $F_1', F_2',\dots,F_{\eta_{m,n}}'$ be the facets of $\Delta_3\left(H_{1\times m \times n}\right)$ where, \begin{align}\label{eqn: reverse lexico ordering}
    F_1' \precc  F_2' \precc \dots \precc F_{\eta_{m,n}}'.
\end{align}

For $m,n \geq 1$, we define $T_i$, for some $1 \leq i \leq \beta_{m,n}$, where $$\beta_{m,n} = \begin{cases}
    mn-2,& m\geq2\\
    n-1,& m = 1
\end{cases},$$ as follows.
\begin{enumerate}
    \item When $ m=1 $ and $n \geq 1$. For $1 \leq i \leq n-1$, we define $T_i$'s as $$ T_i = \lcu 2i + 2n,\ 2i+2n+2,\ 2i+2n+3\rcu^c.
    $$

    \item When $m \geq 2$ and $n \geq 1$.
    \begin{enumerate}
        \item For each $ k = 1, 2, \dots, n - 1 $, and for $(k-1)m + 1 \leq i \leq km$, define
        $$
        T_i = \lcu \begin{array}{c}
             i + m + n + mn + (k - 1),\ i + 2m + n + mn + k,\\
             i + 2m + n + mn + (k + 1)
        \end{array} \rcu^c.
        $$
    
        \item For $ (n-1)m + 1 \leq i \leq nm-2 $, define
        $$
        T_i = \left\{ i + m + 2n + mn,\; i + 2m + 2n + mn,\; i + 2m + 2n + mn + 1 \right\}^c.
        $$
    \end{enumerate}
\end{enumerate}

Let $t_i$ be the index of $T_i$ in \Cref{eqn: reverse lexico ordering}, i.e, $T_i = F_{t_i}'$. We remove the facets $T_1$, $T_2$, $\dots$, $T_{\beta_{m,n}}$ one by one from \Cref{eqn: reverse lexico ordering} and place them at the end in the order in which they were removed.

\begin{definition}\label{defn: new shelling order}
    We define the following ordering on the facets of $\Delta_3\left(H_{1\times  m\times n}\right)$: 
    \begin{align}\label{eqn: explicit shelling order}
        F_1,\ F_2, \dots,\ F_{\eta_{m,n}-\beta_{m,n}},\ T_1,\ T_2, \dots,\ T_{\beta_{m,n}}.
    \end{align}

    where the facets $F_j$ using \Cref{eqn: reverse lexico ordering} is given by:

    \begin{enumerate}
    \itemsep0em
        \item For $1 \leq j \leq t_1 - 1$, $$F_j = F_j'.$$
        \item For $2 \leq i \leq \beta_{m,n}-1$ and $t_{i-1}+1 \leq j \leq t_i - 1$, $$F_{j-i+1} = F_j'.$$
        \item For $t_{\beta_{m,n}}+1 \leq j \leq \eta_{m,n}$, $$F_{j-\beta_{m,n}} = F_j'.$$
    \end{enumerate}
\end{definition}

\begin{obs}\label{obs: Ti'c is neighborhood of certain vertices}
    Note that $T_i^c$ defined above are the neighborhoods of certain vertices, that is,
    \begin{align}\label{Eqn: nbhd description of Ti}
        T_i^c = \begin{cases}
            \mathcal{N} \lno i+m \rno, & 1 \leq i \leq m;\\
            \mathcal{N} \lno i+m+1 \rno, & m+1 \leq i \leq 2m;\\
            \mathcal{N} \lno i+m+2\rno, & 2m+1 \leq i \leq 3m;\\
            \vdots& \\
            \mathcal{N} \lno i+m+\lno k-1 \rno \rno, & \lno k-1\rno m + 1 \leq i \leq km;\\
            \vdots & \\
            \mathcal{N} \lno i+m+\lno n-2 \rno \rno, & \lno n-2 \rno m + 1 \leq i \leq \lno n-1 \rno m;\\
            \mathcal{N} \lno i+m+n \rno, & \lno n-1 \rno m + 1 \leq i \leq mn-2.\\
        \end{cases}
    \end{align}
\end{obs}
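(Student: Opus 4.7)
The plan is a direct case-by-case verification based on the four edge families $\mathcal{E}_1,\ \mathcal{E}_2,\ \mathcal{E}_3,\ \mathcal{E}_4$ appearing in the definition of $H_{1\times m\times n}$. For each range of $i$ listed in \Cref{Eqn: nbhd description of Ti}, I would identify the candidate vertex $v$, note that $v \in V_1$ so by \Cref{obs: V1 nbhd c} its neighborhood lies in $V_2$ (in particular, $v$ contributes only as a source of an edge and never as a target), determine which three of the four edge families have $v$ in their source range, and compute the corresponding three target vertices. The goal is to verify that these agree with the three elements of $T_i^c$.

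For the main range ($m \geq 2$, $1 \leq k \leq n-1$, $(k-1)m+1 \leq i \leq km$), I would set $v = i + m + (k-1)$. A short check yields $v \in [km+k,\ (k+1)m+k-1] \subseteq V_1$ and, using $k \leq n-1$, also $v \in [m+1,\ n+nm-2]$. Hence $v$ is a source in $\mathcal{E}_2,\mathcal{E}_3,\mathcal{E}_4$ but not in $\mathcal{E}_1$, and its three neighbors work out to
\begin{align*}
    v + (n+nm)       &= i + m + n + mn + (k-1),\\
    v + (m+n+mn+1)   &= i + 2m + n + mn + k,\\
    v + (m+n+mn+2)   &= i + 2m + n + mn + (k+1),
\end{align*}
which is precisely $T_i^c$. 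The final range ($m \geq 2$, $(n-1)m+1 \leq i \leq mn-2$) is analogous, except the active edge families shift: taking $v = i + m + n$ gives $v \in [mn+n+1,\ mn+m+n-2] \subseteq V_1$, and now $v$ lies in the second interval $[1+n+nm,\ m+n+nm]$ of the source range of $\mathcal{E}_1$ and in those of $\mathcal{E}_2,\mathcal{E}_3$, but is strictly above the source range $[m+1,\ n+nm-2]$ of $\mathcal{E}_4$. The three neighbors from $\mathcal{E}_1,\mathcal{E}_2,\mathcal{E}_3$ then reproduce $T_i^c$. The degenerate case $m = 1$ is handled in the same way with $v = i + 1$ and neighbors coming from $\mathcal{E}_2,\mathcal{E}_3,\mathcal{E}_4$.

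The argument involves no conceptual subtlety; the only genuine obstacle is keeping the range of $v$ and the source ranges of the $\mathcal{E}_j$'s aligned as $i$ varies. The bookkeeping is most delicate at the boundary $i = (n-1)m + 1$, where the $\mathcal{E}_4$-edge incident to $v$ is replaced by an $\mathcal{E}_1$-edge; this reflects the transition from the "bulk" interior $V_1$-vertices to the rightmost column of interior $V_1$-vertices in $H_{1\times m\times n}$, which is exactly the reason the final case has to be treated separately.
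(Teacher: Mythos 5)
The paper states this Observation without proof (it is offered as an immediate consequence of the construction of the $T_i$'s), so your task is essentially to supply the verification, and a direct case check against the edge families is the natural thing to do. Your treatment of the two $m\geq 2$ ranges is arithmetically correct: for $(k-1)m+1 \leq i \leq km$ you set $v=i+m+(k-1)$ and correctly obtain $v\in[km+k,\,(k+1)m+k-1]\subseteq[m+1,\,n+nm-2]$, so the three targets via $\mathcal{E}_2,\mathcal{E}_3,\mathcal{E}_4$ are $v+(n+nm)$, $v+(m+n+mn+1)$, $v+(m+n+mn+2)$, which match $T_i^c$; and for the final range $v=i+m+n$ lands in the second $\mathcal{E}_1$-interval $[1+n+nm,\,m+n+nm]$ while falling out of the $\mathcal{E}_4$-range, giving the required shift in active families.

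However, your $m=1$ case contains a genuine error. You write $v=i+1$, which implicitly freezes $k=1$ in the formula $v=i+m+(k-1)$. But when $m=1$ each block $[(k-1)m+1,\,km]$ collapses to the single value $i=k$, so the index $k$ moves in lockstep with $i$, and the correct center is
\[
v \;=\; i+m+(k-1)\Big|_{m=1,\,k=i} \;=\; 2i.
\]
Indeed, with $m=1$ one has $T_i^c=\{2i+2n,\,2i+2n+2,\,2i+2n+3\}$, and
\[
\mathcal{N}(2i) \;=\; \{\,2i+(n+nm),\;2i+(m+n+mn+1),\;2i+(m+n+mn+2)\,\}\Big|_{m=1} \;=\; \{2i+2n,\,2i+2n+2,\,2i+2n+3\},
\]
which agrees with $T_i^c$; whereas $\mathcal{N}(i+1)=\{i+2n+1,\,i+2n+3,\,i+2n+4\}$, which equals $T_i^c$ only when $i=1$. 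So your claimed $v$ is correct only for the very first facet $T_1$, and the verification breaks for all $i\geq 2$. The fix is simply to take $v=2i$ (equivalently, to apply your general formula with $i=k$ rather than $k=1$) for the $m=1$ branch; the rest of your argument then goes through unchanged, since $2i\in[2,\,2n-2]=[m+1,\,n+nm-2]$ for $1\leq i\leq n-1$, placing $v$ in the $\mathcal{E}_2,\mathcal{E}_3,\mathcal{E}_4$ source ranges exactly as in the main case.

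One smaller caveat worth flagging: your argument quietly reads the $\mathcal{E}_3$ source set $\lsq m,\ n+nm,\ m+n+nm-1\rsq$ as the interval $[m,\,m+n+nm-1]$, and this reading is what lets you conclude $v\in[m+1,\,n+nm-2]$ is an $\mathcal{E}_3$-source. Given the paper's nonstandard three-argument bracket this is a plausible interpretation and is what makes the degree-three structure consistent with \Cref{fig: graph of H(1_4_6)}, but it deserves a sentence of justification, since under a reading such as $[m]\cup[n+nm,\,m+n+nm-1]$ your claim that $v$ is always an $\mathcal{E}_3$-source in the main range would no longer follow from the stated ranges alone.
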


The central idea of this construction is to begin with the reverse lexicographic order on facets and then relocate a carefully chosen family of exceptional facets corresponding to neighborhoods of certain vertices. These exceptional facets obstruct shellability if left in place and must be appended at the end.




We call the vertex $c_i$ for which $T_i^c = \mathcal{N}\lno c_i\rno$ as the \textit{centre} of $T_i$, and observe that, $$ T_1 \precc  T_2 \precc \dots \precc T_{\beta_{m,n}}.$$ Also, we have some immediate observations from these explicit constructions of $T_i$.

\begin{obs}\label{obs: every element of T_i^c is in V2}
    $T_i^c \subset V_2$, for all $1 \leq i \leq \beta_{m,n}$.
\end{obs}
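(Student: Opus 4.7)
The plan is to derive this observation directly from the two immediately preceding ones. Observation~\ref{obs: Ti'c is neighborhood of certain vertices} already identifies $T_i^c$ with the open neighborhood $N(c_i)$ of a specific ``centre'' vertex $c_i$, and Observation~\ref{obs: V1 nbhd c} asserts that $N(v) \subset V_2$ whenever $v \in V_1$. Consequently, the only substantive step is to verify that every centre $c_i$ lies in $V_1 = [m+n+mn]$; once this is done, the conclusion $T_i^c = N(c_i) \subset V_2$ is immediate by Observation~\ref{obs: V1 nbhd c}.

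First I would read off, from the formula \eqref{Eqn: nbhd description of Ti} in Observation~\ref{obs: Ti'c is neighborhood of certain vertices}, the explicit expressions for the centres: in the generic strip $(k-1)m+1 \leq i \leq km$ with $1 \leq k \leq n-1$ one has $c_i = i+m+(k-1)$, and in the terminal strip $(n-1)m+1 \leq i \leq mn-2$ one has $c_i = i+m+n$. The boundary case $m=1$ is handled analogously from the direct definition of $T_i$ in that setting.

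Next, I would verify the inequality $c_i \leq m+n+mn$ in each range by a one-line arithmetic estimate. In the generic strip, $c_i \leq km + m + (k-1) = (k+1)m + k - 1 \leq nm + n - 2 < m+n+mn$, where the middle step uses $k \leq n-1$. The terminal strip gives $c_i \leq (mn-2) + m + n = mn + m + n - 2 < m + n + mn$, and the $m=1$ case is an equally short check against $V_1 = [2n+1]$. Since $c_i \geq 1$ is automatic, this places every centre in $V_1$, and applying Observation~\ref{obs: V1 nbhd c} finishes the argument.

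There is no genuine obstacle here: the observation is an immediate corollary of the preceding two, modulo routine bookkeeping across the strips. The only place to be slightly careful is the consistency between the piecewise description of $c_i$ in \eqref{Eqn: nbhd description of Ti} and the piecewise definition of $T_i$, but once one reads the two in parallel the upper-bound verifications are uniformly one line each.
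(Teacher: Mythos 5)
Your argument is correct and is exactly the reasoning the paper implicitly relies on: the paper states this as an "immediate observation" with no written proof, right after introducing the centre $c_i$ with $T_i^c = \mathcal{N}(c_i)$ and after recording Observation~\ref{obs: V1 nbhd c}, so the intended justification is precisely $c_i \in V_1 \Rightarrow T_i^c = \mathcal{N}(c_i) \subset V_2$. Your arithmetic bounds $c_i \leq m+n+mn$ across the strips (and the $m=1$ reduction) check out, so this is a faithful elaboration of the paper's implicit one-line argument.
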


\begin{obs}\label{obs: Ti^c is an independent set}
    For all $1 \leq i \leq \beta_{m,n}$, $T_i^c$ is an independent set.
\end{obs}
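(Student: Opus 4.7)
The plan is to derive this observation as an almost immediate consequence of the two preceding structural facts: \Cref{obs: V1 nbhd c}, which records that $H_{1 \times m \times n}$ is bipartite with parts $V_1$ and $V_2$, and \Cref{obs: every element of T_i^c is in V2}, which places $T_i^c$ inside $V_2$. The core idea is that $H_{1 \times m \times n}$ has no edge with both endpoints in $V_2$, and hence any subset of $V_2$ is automatically independent.

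Concretely, I would fix $i$ with $1 \leq i \leq \beta_{m,n}$ and argue by contradiction. Suppose $T_i^c$ contains two adjacent vertices $u, v$. By \Cref{obs: every element of T_i^c is in V2}, both $u, v \in V_2$. But then $v \in N(u)$, and \Cref{obs: V1 nbhd c} forces $N(u) \subset V_1$ since $u \in V_2$, so $v \in V_1$, contradicting $v \in V_2$. Hence no two vertices of $T_i^c$ are adjacent, and $T_i^c$ is independent.

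I do not anticipate any genuine obstacle. The substantive work has already been done in setting up \Cref{obs: Ti'c is neighborhood of certain vertices}, where $T_i^c$ is identified as the neighborhood $N(c_i)$ of an explicit center $c_i \in V_1$, together with the verification that each such neighborhood is contained in $V_2$. Given those, the independence of $T_i^c$ is a direct structural consequence of the bipartite partition $V(H_{1 \times m \times n}) = V_1 \sqcup V_2$ and requires no case analysis on $m$, $n$, or $i$.
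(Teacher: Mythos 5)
Your argument is correct and is exactly the reasoning the paper leaves implicit when it labels this fact an ``immediate observation'': since $T_i^c \subset V_2$ by \Cref{obs: every element of T_i^c is in V2} and the bipartition recorded in \Cref{obs: V1 nbhd c} means no edge joins two vertices of $V_2$, any subset of $V_2$ is independent. Nothing more is needed.
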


\begin{obs}
    If $T_i^c = \lcu x_1,\ x_2,\ x_3 \rcu$, with $x_1 < x_2 < x_3$, then $x_3 = x_2 + 1$, that is, $x_2$ and $x_3$ are consecutive integers. Consequently, $$T_i = \lcu x_1,\ x_2,\ x_2+1 \rcu.$$
\end{obs}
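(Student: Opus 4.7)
The plan is a direct case-by-case verification using the explicit definitions of $T_i$. There is no combinatorial subtlety: for each of the three cases in the construction (the case $m = 1$, the generic band case for $m \geq 2$, and the final band case for $m \geq 2$) I would simply list the three elements of $T_i^c$, sort them, and read off that the largest two are consecutive integers.

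Concretely, for $m = 1$ and $1 \leq i \leq n - 1$, the set
$T_i^c = \lcu 2i+2n,\ 2i+2n+2,\ 2i+2n+3 \rcu$
is already in increasing order, and the last two entries differ by $1$. For $m \geq 2$ in the band indexed by $k$, with $(k-1)m + 1 \leq i \leq km$,
$T_i^c = \lcu i+m+n+mn+(k-1),\ i+2m+n+mn+k,\ i+2m+n+mn+(k+1) \rcu$;
here $x_2 - x_1 = m + 1 > 0$ and $x_3 - x_2 = 1$, so the stated ordering $x_1 < x_2 < x_3$ holds and $x_3 = x_2 + 1$. The final band $(n-1)m + 1 \leq i \leq mn - 2$ is analogous:
$T_i^c = \lcu i+m+2n+mn,\ i+2m+2n+mn,\ i+2m+2n+mn+1 \rcu$,
and one checks $x_2 - x_1 = m \geq 2$ and $x_3 - x_2 = 1$.

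Since each case reduces to inspection, there is really no obstacle; the only thing to be careful about is confirming the correct ordering within each triple, which is handled by the strict positivity of the differences $2$, $m+1$, and $m$ in the three cases, respectively. Taking complements then yields $T_i = \lcu x_1,\ x_2,\ x_2 + 1 \rcu$ as claimed.
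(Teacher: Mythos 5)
Your proof is correct and matches the paper's intended approach: the observation is stated without proof precisely because it follows from a direct case-by-case inspection of the explicit formulas defining $T_i^c$, which is exactly what you carry out. One small note: the paper's concluding display contains a typo---it should read $T_i^c = \{x_1, x_2, x_2+1\}$ rather than $T_i = \{x_1, x_2, x_2+1\}$---and your final remark about ``taking complements'' echoes that slip, but your verification that the two largest entries differ by $1$ in all three cases is sound.
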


Before proving our main result, we prove the following lemma, which will play a significant role in proving the shellability of $\Delta_3 \left(H_{1\times m \times n}\right)$.

\begin{lemma}\label{lemma: Ti fails in reverse lex ordering}
    Let $T_1,\ T_2,\dots,\ T_{\beta_{m,n}}$ denote the facets that were removed from the reverse lexicographic ordering of the facets of $\Delta_3(H_{1 \times m \times n})$, as defined previously. If a facet $T_i$ were kept at its original position, say $j_i$ (that is, $F_{j_i} = T_i$), in the reverse lexicographic order, then the shellability condition would fail for each such $T_i$; that is, there exists an index $i' < j_i$ such that there is no index $r < j_i$ satisfying
    $$F_r \cap F_{j_i} = F_{j_i} \setminus \{ \lambda \},$$ where $\lambda \in F_{j_i} \setminus F_{i'}$.
\end{lemma}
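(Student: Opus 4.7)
The plan is, for each $T_i$, to exhibit one explicit facet $F_{i'}$ that precedes $T_i$ in the reverse lexicographic order and against which no $r < j_i$ can witness the shellability condition. Writing $T_i^c = N(c_i) = \{x_1 < x_2 < x_3\}$ with $c_i \in V_1$ (as in \Cref{obs: Ti'c is neighborhood of certain vertices}; for the $m = 1$ branch the analogous identification gives $c_i = 2i$) and letting $N := 2m+2n+2mn$ denote the largest vertex, I would take
\[
    F_{i'} \;:=\; V\bigl(H_{1\times m\times n}\bigr) \setminus \{c_i,\, x_1,\, N\}.
\]

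First I would certify that $F_{i'}$ is a facet strictly earlier than $T_i$. Since $x_1, N \in V_2$ there is no edge $x_1 N$; the only candidate edges inside $\{c_i, x_1, N\}$ are $c_i x_1$ (present, because $x_1 \in N(c_i)$) and $c_i N$. The formulas for $c_i$ give $c_i \leq mn+m+n-2$, whereas a direct check of $\mathcal{E}_1$ and $\mathcal{E}_3$ shows $N(N) \subseteq \{mn+m+n-1,\, mn+m+n\}$; hence $c_i N \notin E$ and $\{c_i, x_1, N\}$ induces an edge together with an isolated vertex, which is disconnected. Moreover, $c_i \in V_1$ and $x_1 \in V_2$ together with the partition $V(H_{1\times m\times n}) = V_1 \sqcup V_2$ give $c_i < x_1 = \min T_i^c$, placing $F_{i'}$ strictly before $T_i$ in the reverse lexicographic order.

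Next I would rule out every candidate $r$. Any witness $F_r$ would have $F_r^c = (T_i^c \setminus \{\mu\}) \cup \{\lambda\}$ with $\mu \in T_i^c$, $\lambda \in F_{i'}^c \setminus T_i^c = \{c_i, N\}$, and $F_r \prec\prec T_i$. For $\lambda = c_i$ the swapped set $\{c_i, x_a, x_b\}$ carries both edges $c_i x_a$ and $c_i x_b$, so its induced subgraph is an induced $P_3$ — connected — and $\{c_i, x_a, x_b\}$ is not a facet complement at all. For $\lambda = N$ each of the three swap options is independent in $V_2$ and is a valid facet complement, but sits strictly after $T_i$ rather than before: $\mu = x_1$ gives $\{x_2, x_3, N\}$ whose smallest element $x_2$ exceeds $x_1$; $\mu = x_2$ gives $\{x_1, x_3, N\}$ whose middle element $x_3$ exceeds $x_2$; and $\mu = x_3$ gives $\{x_1, x_2, N\}$ whose largest element $N$ exceeds $x_3$. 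No value of $\lambda$ produces a valid $r$, so $F_{i'}$ blocks the shellability condition for $T_i$.

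The main obstacle is the uniform verification that $c_i \notin N(N)$ across every branch of \Cref{obs: Ti'c is neighborhood of certain vertices} (including the separately-treated $m = 1$ case, where $c_i = 2i \leq 2n-2$ while $N(N) = \{2n,\, 2n+1\}$). This is an elementary index comparison, but one that must be checked in each sub-case in order to make the blocker $F_{i'} = V \setminus \{c_i, x_1, N\}$ well defined; once this bookkeeping is in place, the two-case argument above closes out the lemma uniformly.
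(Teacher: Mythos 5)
Your proof is correct and takes essentially the same route as the paper: select a facet $F_{i'}$ whose complement contains the centre $c_i$ of $T_i$ together with vertices that force every candidate witness $F_r$ either to recreate an induced $P_3$ (when $\lambda = c_i$ is added back) or to land strictly after $T_i$ in the reverse lexicographic order (when the large vertex is added back). The only genuine difference is the blocking facet itself: you take $F_{i'}^c = \{c_i,\, x_1,\, N\}$ with $x_1 = \min T_i^c$, whereas the paper takes $F_{i'}^c = \{c_i,\, N-1,\, N\}$ with the two largest vertices. Your variant is marginally tidier in one respect — because $x_1 \in T_i^c$ while $N \notin T_i^c$ for every $i$, you get $T_i \setminus F_{i'} = \{c_i, N\}$ uniformly, whereas the paper's choice requires a separate case for $i = \beta_{m,n}$ (where $N-1 \in T_{\beta_{m,n}}^c$ and so $T_{\beta_{m,n}} \setminus F_{i'}$ has only two elements rather than three). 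Your three-way case analysis of $\mu = x_1, x_2, x_3$ for $\lambda = N$ is a careful unwinding of what the paper states more tersely as ``$T_i^c \succc F_r^c$.'' The index comparison you flag as the ``main obstacle,'' namely $c_i \leq m+n+mn-2 < m+n+mn-1 = \min N(N)$ across all branches of the observation, is indeed needed to certify that $\{c_i, x_1, N\}$ induces exactly one edge (hence is a facet complement), and your check of $\mathcal{E}_1,\mathcal{E}_3$ for $N(N)$ is correct; the paper leaves the analogous verification for its own $F_{i'}$ implicit, so you are being more explicit rather than filling a gap.
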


\begin{proof}
    Fix a facet $T_i$. By construction, $T_i$ is the open neighborhood of a vertex $x_i$, as defined in \Cref{Eqn: nbhd description of Ti}. For $T_i$ to satisfy the shellability condition \hyperref[shellability condition]{($*$)}, there must exist, for every $i' < j_i$, a facet $F_r$ with $r < j_i$ satisfying
    $$F_r \cap F_{j_i} = F_{j_i} \setminus \{ \lambda \},$$ for some $\lambda \in F_{j_i} \setminus F_{i'}$. We claim this is impossible because an index $i' < j_i$ exists for which no such $F_r$ exists. Define:
    $$F_{i'} := \left( \{ x_i,\ 2m+2n+2mn-1,\ 2m+2n+2mn \} \right)^c.$$
    
    Observe that
    $$T_i \setminus F_{i'} =
    \begin{cases}
    F_{i'}^c, & \text{for } 1 \leq i < \beta_{m,n}; \\
    \{ x_{\beta_{m,n}},\ 2m+2n+2mn \}, & \text{for } i = \beta_{m,n}.
    \end{cases}$$

    To construct $F_r$, we must remove a vertex from $T_i^c$ and add a vertex from $T_i \setminus F_{i'}$. However, for any $\lambda \in T_i \setminus F_{i'}$, the following occurs:
    \begin{enumerate}[label=\roman*.]
        \item If $\lambda = x_i$, then $F_r^c$ induces a $P_3$, contradicting the definition of a facet in $\Delta_3 \lno H_{1\times m \times n} \rno$.
        \item If $\lambda \neq x_i$, then $T_i^c \succc F_r^c$, or equivalently, $T_i \precc F_r$, which contradicts $r < j_i$ under reverse lexicographic ordering.
    \end{enumerate}

    In both cases, no such $F_r$ satisfying the shellability condition exists, and thus the condition fails for $T_i$ at position $j_i$. This completes the proof of \Cref{lemma: Ti fails in reverse lex ordering}.
\end{proof}

\begin{remark}
    There are other possible choices for the facet $F_{i'}$ which also fail to work for each $T_i$. In each such case, the corresponding $F_r$ constructed from that choice leads to the same issues described in the proof above, and the argument for the failure remains the same.
\end{remark}

\subsection[Shellability of $\Delta_3\lno H_{1\times m \times n}\rno$]{Shellability of $\texorpdfstring{\bm{\Delta_{3}\lno H_{1\times m \times n}\rno}}{\Delta_{3}\lno H_{1\times m \times n}\rno}$}

We now state the main result of this section.

\begin{theorem}\label{thm: shellability of hexagonal grid graphs}
    For $m, n\geq 1$, the simplicial complex $\Delta_3\left(H_{1\times m \times n}\right)$ is shellable.
\end{theorem}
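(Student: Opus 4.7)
The plan is to verify the shellability condition \hyperref[shellability condition]{($*$)} for every ordered pair $(F_i, F_j)$ with $i < j$ in the ordering (\ref{eqn: explicit shelling order}). In the paper's convention, reverse-lex $\precc$ on facets corresponds to colexicographic order on their size-$3$ complements; hence a swap $F_r^c = \lno F_j^c \setminus \{\mu\} \rno \cup \{\lambda\}$ (with $\lambda \in F_j \setminus F_i$ and $\mu \in F_j^c$) yields $F_r \precc F_j$ precisely when $\mu > \lambda$. Beyond this, we must ensure both that $F_r^c$ induces no $P_3$ in $H_{1\times m \times n}$ (so $F_r$ is a genuine facet) and that $F_r$ truly precedes $F_j$ in the new order: meaning $F_r$ is regular, or one of $T_1, \dots, T_{\ell-1}$ when $F_j = T_\ell$.

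I would organise the verification by the type of $F_j$. When $F_j$ is regular, both $F_i$ and $F_j$ sit in their reverse-lex positions and standard reverse-lex shelling tricks apply: set $\mu := \max(F_i^c \triangle F_j^c)$, which lies in $F_j^c$ by hypothesis, and pick a $\lambda \in F_i^c \setminus F_j^c$. The real work is choosing $\lambda$ so that $F_r^c$ does not induce a $P_3$ and is not itself one of the displaced $T_k$'s. \Cref{Observation 1}, \Cref{Observation 2}, and \Cref{obs: V1 nbhd c} control the $P_3$-structure tightly enough that, when the default $\lambda$ fails, we can substitute an alternative choice from $F_i^c \setminus F_j^c$.

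When $F_j = T_\ell$, the relocation of $T_\ell$ to the end of the ordering furnishes new $F_r$-candidates that were blocked before: facets whose complements contain one of the largest vertices $N-1$ or $N = 2m+2n+2mn$ sit after $T_\ell$ in reverse-lex order, and are now available. For the canonical obstruction $F_{i'}^c = \{c_\ell, N-1, N\}$ from \Cref{lemma: Ti fails in reverse lex ordering}, the failed choice $\lambda = c_\ell$ is replaced by $\lambda = N$ (or $N-1$): we set $F_r^c := \lno T_\ell^c \setminus \{\mu\} \rno \cup \{N\}$ for a suitably chosen $\mu \in T_\ell^c = \mathcal{N}(c_\ell)$. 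Because $\mathcal{N}(c_\ell)$ is independent (\Cref{obs: Ti^c is an independent set}) and $N$ has degree at most two in $H_{1\times m \times n}$, at least one $\mu$ leaves the resulting triple free of an induced $P_3$. A parallel argument using $\lambda \in T_\ell \cap T_k^c$ handles the case $F_i = T_k$ with $k < \ell$.

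The central difficulty is the combinatorial check that $F_r^c$ never accidentally induces a $P_3$. The shape of induced $P_3$'s is rigid (endpoints in one of $V_1, V_2$ and midpoint in the other, by \Cref{Observation 1}, with unique endpoints by \Cref{Observation 2}), but when the default swap does create one, finding an alternative $(\lambda, \mu)$ requires delicate case work around boundary vertices of the hexagonal grid, where the ``plenty of room to swap'' heuristic breaks down. Handling these boundary cases uniformly across all $m, n \geq 1$ is the technical heart of the proof and is where I would expect to spend the bulk of the effort.
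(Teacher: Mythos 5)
Your overall strategy matches the paper exactly: start from the reverse-lex order on facets, peel off the exceptional facets $T_1,\dots,T_{\beta_{m,n}}$ (whose complements are open neighborhoods) and append them at the end, then verify the shellability condition \hyperref[shellability condition]{($*$)} pairwise, paying special attention to two failure modes for the would-be witness $F_r$: its complement inducing a $P_3$, or its complement coinciding with some relocated $T_k^c$. So the plan is sound, and you correctly identify where the work lives. However, there is a concrete error that breaks your default choice in Part A, and the hard parts remain unargued.

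The error: you claim that the paper's reverse-lex on facets is colex on the size-$3$ complements, and hence that $\mu := \max\lno F_i^c \triangle F_j^c\rno$ lies in $F_j^c$ whenever $i<j$. That is false for this paper. The operational test in every subcase of the paper's Case 1 is $F_i^c <_{\mathrm{lex}} F_j^c$ (increasing sort, compare the smallest differing coordinate); equivalently, $\min\lno F_i^c \triangle F_j^c\rno \in F_i^c$. These two orders disagree. For instance, subcase \hyperref[subcase 1.4]{1.4} has $i_1<i_2<j_1<j_2<j_3<i_3$ with $F_i^c\cap F_j^c=\emptyset$ and $i<j$ asserted; here $\max\lno F_i^c\triangle F_j^c\rno = i_3 \in F_i^c$, not in $F_j^c$, so under colex one would instead have $j<i$. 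Your default $\mu$ is therefore not always a legal choice for $\alpha$, and the chain ``set $\mu=\max(\dots)$, pick $\lambda$, check the $P_3$ structure'' does not get off the ground for those configurations. (Your narrower claim, that a single swap $F_r^c = \lno F_j^c\setminus\{\mu\}\rno\cup\{\lambda\}$ has $F_r\precc F_j$ iff $\mu>\lambda$, does happen to hold under both lex and colex for one-element swaps, but that does not rescue the choice of $\mu$.)

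Beyond that, you correctly flag the $P_3$-avoidance casework as ``the technical heart'' but do not carry it out; the paper resolves it via an extensive enumeration (Cases 1--3 with about twenty subcases, plus explicit exceptional configurations such as $F_j^c=\{z-m-1,z-1,z\}$ in subcase \hyperref[subcase 1.1]{1.1} where a second choice of $\lambda$ is required). Asserting that ``when the default $\lambda$ fails, we can substitute an alternative'' is exactly the claim that needs to be proven, not assumed. Finally, your treatment of $F_j = T_\ell$ only handles the canonical obstruction $F_{i'}^c=\{c_\ell,\,N-1,\,N\}$ from \Cref{lemma: Ti fails in reverse lex ordering}; for a general regular $F_i\prec T_\ell$, the element $N$ need not lie in $F_i^c$, so $\lambda=N$ is not available, and one must instead argue (using that $T_\ell^c=\mathcal{N}(c_\ell)\subset V_2$ is independent, that $c_\ell$ is the unique common neighbor of any two elements of $T_\ell^c$, and \Cref{obs: V1 nbhd c}) that some $\lambda\in F_i^c\setminus T_\ell^c$ with $\lambda\ne c_\ell$ exists and produces a valid $F_r$. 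This can be made to work, and your appeal to the independence of $\mathcal{N}(c_\ell)$ is a step in the right direction, but as written the argument covers only one $F_i$.
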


\begin{proof}
    We claim that the ordering given in \Cref{defn: new shelling order} is the required shelling order for the facets of $\Delta_3\left(H_{1\times m \times n}\right)$. That is, we aim to show the following:
    \begin{enumerate}[label = \Alph*.]
        \item\label{part A} $F_1,\ F_2,\dots,\ F_{\eta_{m,n} - \beta_{m,n}}$ are in shelling order.
        \item\label{part B} If the $T_i$'s are placed among the $F_i$'s according to the reverse lexicographic ordering (that is, \Cref{eqn: reverse lexico ordering}), the resulting order is not a shelling order.
        \item\label{part C} For all $1 \leq i \leq \beta_{m,n}$, $T_i$ satisfies the shellability condition with every $F_j$, where $1\leq j \leq \eta_{m,n} - \beta_{m,n}$ and with every $T_{i'}$, where $i' < i$.
        \item\label{part D} $T_1,\ T_2, \dots,\ T_{\beta_{m,n}}$ are in shelling order.
    \end{enumerate}

    \hyperref[part B]{B} follows from \Cref{lemma: Ti fails in reverse lex ordering}. In its proof, the failure of $T_i$ to satisfy the shellability condition when kept in reverse lexicographic ordering arose because the required $F_r$ (with $r<j_i$) appeared after $T_i$. Placing all $T_i$ at the end resolves this issue. Thus, $T_i$ satisfies the shellability condition with $F_j$. Consequently, \hyperref[part D]{D} is proved. It remains only to show in \hyperref[part D]{D} that $T_i$ satisfies the shellability condition with $T_{i'}$ with $i' < i$. The argument for this is analogous to that used in proving \hyperref[part A]{A}. We therefore proceed to prove \hyperref[part A]{A}.
    
    To prove \hyperref[part A]{A}, we need to prove that the ordering $F_1,\ F_2,\dots,\ F_{\eta_{m,n} - \beta{m,n}}$ satisfies the condition \Cref{shellability condition}. In other words, we must construct a suitable $F_r$ for any given choice of $F_i$ and $F_j$. Note here that, $i<j$ if and only if $F_i \precc F_j$ (or equivalently, $F_i^c \succc F_j^c$). We will deal with the complements of the facets to prove our result. Also, observe that for any $1 \leq i < j \leq \lno \eta_{m,n} - \beta_{m,n} \rno$, we have the following three scenarios:
    \begin{enumerate}[label = \arabic*.]
        \item $\left| F_i^c \cap F_j^c \right| = 0$;
        \item $\left| F_i^c \cap F_j^c \right| = 1$;
        \item $\left| F_i^c \cap F_j^c \right| = 2$.
    \end{enumerate}
    
    We deal with all of these cases one by one.
    
    \caseheading{Case 1:}\label{Case 1 - main theorem} $\bm{\left| F_{i}^c \cap F_{j}^c \right| = 0}$.
        
    Let $F_i^c = \lcu i_1,\ i_2,\ i_3 \rcu$ and $F_j^c = \lcu j_1,\ j_2,\ j_3 \rcu$. Without loss of generality (WLOG), let $i_1 < i_2 < i_3$ and $j_1 < j_2 < j_3$.
        
    Since $F_i^c \succc F_j^c$, we must have $i_1 < j_1$. Note that $F_i^c \subset F_j$ (since $F_i^c \cap F_j^c = \emptyset$), which implies, $$F_j\setminus F_i =  F_i^c.$$
        
    We examine all possible configurations of $F_i^c$ and $F_j^c$ under the given condition, and construct $F_r^c$ accordingly. In this case, $F_r$ is explicitly defined as, $$F_r = \left(F_j \cup \lcu \alpha \rcu\right) \setminus\lcu \lambda \rcu,$$ where, $\lambda \in F_i^c$ and $\alpha \in F_j^c$. Equivalently, $$F_r^c = \left(F_j^c \cup \lcu \lambda \rcu \right) \setminus \lcu \alpha \rcu.$$
        
    We call the pair $\lno \lambda, \alpha \rno$ an \textit{optimal pair} if it yields such an $F_r^c$ that satisfies \hyperref[shellability condition]{$(*)$}. Based on the relative ordering of $i_2$, $i_3$, $j_1$, $j_2$ and $j_3$, we have the several sub-cases. The objective here is to look at the worst-case scenarios for each sub-case, that is, if $F_r^c$ is some $T_i^c$ or induces $P_3$.
    
    \begin{enumerate}[label = \bfseries1.\arabic*.]
    
        \item\label{subcase 1.1} $\bm{i_{1} < i_{2} < i_{3} < j_{1} < j_{2} < j_{3}.}$

        To find an optimal pair, we start by choosing $\lno \lambda, \alpha \rno = \lno i_1, j_2 \rno$. This gives $$F_r^c = \lcu i_1 < j_1 < j_3 \rcu,$$ such that $F_r^c \succc F_j^c$.

        At first, suppose that $F_r^c = T_k^c$, for some $1 \leq k \leq \beta_{m,n}$. This will imply that $j_3 = j_1 + 1$, that is, $j_1$ and $j_3$ are consecutive integers. However, this is not possible since $j_1 < j_2 < j_3$. Therefore, $F_r^c \neq T_k^c$, for all $1 \leq k \leq \beta_{m,n}$.

        Now, if $F_r^c$ induces $P_3$, then by \Cref{Observation 1}, one of the following two configurations must occur:
        
        \begin{enumerate}[label = (\alph*)]
            \item If $i_1, j_1 \in V_1$ and $j_3 \in V_2$, then $i_1$ and $j_1$ are the endpoints, each adjacent to $j_3$, the midpoint.
            \item If $i_1 \in V_1$ and $j_1, j_3 \in V_2$, then $j_1$ and $j_3$ are the endpoints, each adjacent to $i_1$, the midpoint.
        \end{enumerate}

        It turns out that both of these cases are possible. For instance, in the $\Delta_3 \lno H_{1\times 4 \times 6} \rno$ (see \Cref{fig: graph of H(1_4_6)}), if we take $F_i^c = \lcu 11, 12, 16 \rcu$ and $F_j^c = \lcu 17, 46, 47\rcu$, then for the given choice of $\lambda$ and $\alpha$ we obtain $F_r^c = \lcu 11, 17, 47 \rcu$, which indeed forms an induced $P_3$ in accordance with condition in (a). Similarly, choosing $F_i^c = \lcu 17, 18, 46 \rcu$ and $F_j^c = \lcu 47, 48, 52\rcu$, yields $F_r^c = \lcu 17, 47, 52 \rcu$, which again gives an induced $P_3$ as required by condition in (b). Consequently, the choice $\lno \lambda, \alpha \rno = \lno i_1, j_2 \rno$ does not always yield an optimal pair, as demonstrated above.

        Since the previous choice of $\lno \lambda, \alpha \rno$ is not optimal, we retain the same positioning of $i_1$, $i_2$, $i_3$, $j_1$, $j_2$, and $j_3$ but replace $\alpha = j_2$ by $\alpha = j_3$, while keeping $\lambda = i_1$. Thus, we have $$F_r^c = \lcu i_1 < j_1 < j_2 \rcu,$$ such that $F_r^c \succc F_j^c$. We now establish that this selection is indeed optimal for both conditions given in (a) and (b). In particular, for each part, we will verify both possible worst-case scenarios, namely when $F_r^c$ coincides with some $T_k^c$ and when it induces $P_3$.

        For part (a), $F_r^c$ cannot coincide with any $T_k^c$. Indeed, if this were the case, then $F_r^c$ would have $j_3$ as its center, which is impossible since $j_3 \in V_2$. By \Cref{obs: V1 nbhd c} and \Cref{obs: every element of T_i^c is in V2}, the center of any $T_k^c$ must lie in $V_1$. On the other hand, if $F_r^c$ induces $P_3$, then this can only occur if $j_2$ is in the position of $j_3$, that is, $j_2 = j_3$, which is not the case here. Hence, the pair $\lno \lambda, \alpha \rno = \lno i_1, j_3 \rno$ is optimal.
        
        For part (b), $F_r^c$ cannot be a $T_k^c$ because $i_1$ and $j_1$ are adjacent, whereas by \Cref{obs: Ti^c is an independent set}, all $T_k^c$ are independent. If instead $F_r^c$ induces a $P_3$, then $j_2$ must be adjacent to either $i_1$ or $j_1$. It cannot be adjacent to $i_1$, for in that case $F_j^c = \mathcal{N}\lno i_1 \rno$, which is precisely of the form of some $T_k^c$. However, this is a contradiction to $F_j^c$ not being any $T_i^c$. 
        
        At this point, we explicitly mention a possible exception to this argument. Let $z = 2m+2n+2mn$ and take $F_j^c = \lcu z - m - 1, z-1, z \rcu$. Clearly, $F_j^c = \mathcal{N}\lno m+n+mn-1 \rno$ and it does not coincide with any $T_k^c$, as is clear from the construction of $T_k^c$. In this particular case, both previous choices of $\lno \lambda, \alpha \rno$ will satisfy the condition of all possible worst-case scenarios, including part (b). However, this can be easily resolved by replacing the choice of $\lambda = i_1$ with $\lambda = i_2$ (or even $i_3$), while keeping $\alpha = j_2$. Thus, we will also obtain an optimal pair for this exception.
        
        Now, if $j_2$ is adjacent to $j_1$, then $j_2\in V_1$, since  $j_1 \in V_2$ (by \Cref{obs: V1 nbhd c}, which forces $j_2 < j_1$, which is not possible here. Therefore, the pair $\lno \lambda, \alpha \rno = \lno i_1, j_3 \rno$ (or $\lno i_2, j_3 \rno$ for the exceptional case) forms an optimal pair.

        \item\label{subcase 1.2} $\bm{i_{1} < i_{2} < j_{1} < i_{3} < j_{2} < j_{3}.}$

        Observe that $\lno \lambda, \alpha \rno = \lno i_3, j_1 \rno$ cannot be an optimal pair, since it will imply $F_j^c \succc F_r^c$, or equivalently $F_r \precc F_j$, contrary to our requirement. This subcase then follows using a similar argument given in subcase \hyperref[subcase 1.1]{1.1}.

        \item\label{subcase 1.3} $\bm{i_{1} < i_{2} < j_{1} < j_{2} < i_{3} < j_{3}.}$

        Observe that $\lno \lambda, \alpha \rno$ cannot be $\lno i_3, j_1 \rno$ or $\lno i_3, j_2 \rno$ as a choice for the optimal pair, since this forces $F_j^c \succc F_r^c$. The same optimal pairs that are defined in subcase \hyperref[subcase 1.1]{1.1}, and the arguments thereafter, follow here too, with only a modification concerning the exceptional configuration. Earlier, the exception arises when $F_j^c = \lcu z-m-1,\, z-1,\, z \rcu$, with $j_2 = z-1$ and $j_3 = z$, that is, there is no integer between $j_2$ and $j_3$. However, in this subcase, we have $j_2 < i_3 < j_3$, so this situation cannot arise.
        
        \item\label{subcase 1.4} $\bm{i_{1} < i_{2} < j_{1} < j_{2} < j_{3} < i_{3.}}$

        Observe that $\lno \lambda, \alpha \rno$ cannot be any of $\lno i_3, j_1 \rno$, $\lno i_3, j_2 \rno$, or $\lno i_3, j_3 \rno$ as a choice for the optimal pair, since this forces $F_j^c \succc F_r^c$. The same optimal pairs that are defined in subcase \hyperref[subcase 1.1]{1.1}, and the arguments thereafter, follow here too, with some changes in the exceptional configuration. In subcase \hyperref[subcase 1.1]{1.1}, the exception arises when $F_j^c = \lcu z-m-1,, z-1, z \rcu$ with $j_3 = z = 2m+2n+2mn$. In the present subcase, however, this situation does not arise because we require $j_3 < i_3$, which leaves no possible choice for $i_3$ in $[2m+2n+2mn]$.
        
        \item\label{subcase 1.5} $\bm{i_{1} < j_{1} < i_{2} < i_{3} < j_{2} < j_{3.}}$

        Observe that $\lno \lambda, \alpha \rno$ cannot be $\lno i_3, j_1 \rno$ or $\lno i_3, j_2 \rno$ as a choice for the optimal pair, since this forces $F_j^c \succc F_r^c$. The same optimal pairs that are defined in subcase \hyperref[subcase 1.1]{1.1}, and the arguments thereafter, follow here too, with some changes in the exceptional configuration. If $m \geq 3$, then we can proceed with the argument given for the exception in subcase \hyperref[subcase 1.1]{1.1}. If $m=2$, then $F_j^c = \lcu z-3, z-1, z \rcu$ implying there is only vertex possible in between $j_1 = z-3$ and $j_2 = z-1$ which is $z-2$. However, in this subcase, we have $j_1 < i_2 < i_3 < j_2$, implying this situation cannot arise here.
        
        \item\label{subcase 1.6} $\bm{i_{1} < j_{1} < i_{2} < j_{2} < i_{3} < j_{3.}}$

        Observe that $\lno \lambda, \alpha \rno$ cannot be any of $\lno i_2, j_1 \rno$, $\lno i_3, j_1 \rno$, or $\lno i_3, j_2 \rno$ as a choice for the optimal pair, since this forces $F_j^c \succc F_r^c$. This subcase then follows using a similar argument given in subcase \hyperref[subcase 1.1]{1.1} and \hyperref[subcase 1.3]{1.3}.
        
        \item\label{subcase 1.7} $\bm{i_{1} < j_{1} < i_{2} < j_{2} < j_{3} < i_{3.}}$

        Observe that $\lno \lambda, \alpha \rno$ cannot be any of $\lno i_2, j_1 \rno$, $\lno i_3, j_1 \rno$, $\lno i_3, j_2 \rno$, or $\lno i_3, j_3 \rno$ as a choice for the optimal pair, since this forces $F_j^c \succc F_r^c$. This subcase then follows using a similar argument given in subcase \hyperref[subcase 1.1]{1.1} and \hyperref[subcase 1.4]{1.4}.

        \item\label{subcase 1.8} $\bm{i_{1} < j_{1} < j_{2} < i_{2} < i_{3} < j_{3.}}$

        Observe that $\lno \lambda, \alpha \rno$ cannot be any of $\lno i_2, j_1 \rno$, $\lno i_2, j_2 \rno$, $\lno i_3, j_1 \rno$, or $\lno i_3, j_2 \rno$ as a choice for the optimal pair, since this forces $F_j^c \succc F_r^c$. This subcase then follows using a similar argument given in subcase \hyperref[subcase 1.1]{1.1} and \hyperref[subcase 1.3]{1.3}.
        
        \item\label{subcase 1.9} $\bm{i_{1} < j_{1} < j_{2} < i_{2} < j_{3} < i_{3.}}$

        Observe that $\lno \lambda, \alpha \rno$ cannot be any of $\lno i_2, j_1 \rno$, $\lno i_2, j_2 \rno$, $\lno i_3, j_1 \rno$, $\lno i_3, j_2 \rno$, or $\lno i_3, j_3 \rno$ as a choice for the optimal pair, since this forces $F_j^c \succc F_r^c$. This subcase then follows using a similar argument given in subcase \hyperref[subcase 1.1]{1.1} and \hyperref[subcase 1.4]{1.4}.

        \item\label{subcase 1.10} $\bm{i_{1} < j_{1} < j_{2} < j_{3} < i_{2} < i_{3.}}$

        Observe that $\lno \lambda, \alpha \rno$ is an optimal pair only when $\lambda = i_1$ for some $\alpha \in F_j^c$; otherwise, we have $F_j^c \succc F_r^c$. This subcase then follows using a similar argument given in subcase \hyperref[subcase 1.1]{1.1} and \hyperref[subcase 1.4]{1.4}.
    \end{enumerate}

    \caseheading{Case 2:}\label{Case 2 - main theorem} $\bm{\left| F_{i}^{c} \cap F_{j}^{c} \right| = 1}$.
        
    Let $F_i^c = \lcu i_1,\ i_2,\ x \rcu$ and $F_j^c = \lcu j_1,\ j_2,\ x \rcu$. WLOG, let $i_1 < i_2$ and $j_1 < j_2$.
        
    For the ordering $F_i^c \succc F_j^c$ to hold, it is necessary that $i_1 < j_1$. This will be easily verified by a case-by-case analysis of all possible orderings of $x,\ i_1,\ i_2,\ j_1,\ j_2$, which we will present in the subsequent discussion. Moreover, observe here that $\lcu i_1, i_2 \rcu \subset F_j$, implying, $$F_j \setminus F_i = \lcu i_1, i_2 \rcu.$$ Thus, for this case, we define $F_r$ explicitly as, $$F_r = \lno F_j \cup \lcu \alpha \rcu \rno \setminus \lcu \lambda \rcu,$$ or equivalently, $$F_r^c = \lno F_j^c \cup \lcu \lambda \rcu \rno \setminus \lcu \alpha \rcu,$$ where, $\lambda \in \lcu i_1, i_2 \rcu$ and $\alpha \in F_j^c$. 
        
    If we fix the position of $x$ in $F_i^c$ and accordingly look at the possibilities of $F_j^c$, then we will have the following subcases:
    
    \begin{enumerate}[label = \bfseries2.\arabic*.]
        \item\label{subcase 2.1} $\bm{F_{i}^{c} = \left\{ x < i_{1} < i_{2} \right\}}$ \textbf{and} $\bm{F_{j}^{c} = \lcu x < j_{1} < j_{2} \rcu}$.
        
        It is clear from the above configuration of $F_i^c$ and $F_j^c$ that in order to have $F_i^c \succc F_j^c$, we must have $i_1 < j_1$. Observe that $\lno \lambda, \alpha \rno$ cannot be $\lno i_1, x \rno$ or $\lno i_2, x \rno$ as a choice for the optimal pair, since it forces $F_j^c \succc F_r^c$.
        
        Moreover, it follows that $F_j^c$ can only assume the form $\lcu x < j_1 < j_2 \rcu$ for the given $F_i^c$. Indeed, if $F_j^c$ is of the form $\lcu j_1 < x < j_2 \rcu$ or $\lcu j_1 < j_2 < x \rcu$, then we will always have $F_j^c \succc F_i^c$, contrary to our requirement. So, it remains to analyze the possible orderings of $i_2$, $j_1$, and $j_2$, leading to the following scenarios.

        \begin{enumerate}[label = \bfseries2.1.\arabic*.]
            \item\label{subcase 2.1.1} \textbf{$\bm{i_{2} < j_{1} < j_{2}}$ (that is, $\bm{ x < i_{1} < i_{2} < j_{1} < j_{2}}$).}

            We proceed by choosing $(\lambda, \alpha) = (i_1, j_1)$ as our initial choice for the optimal pair. The $F_r^c$ thus obtained will not assume the form of $T_k^c$ for any $1 \leq k \leq \beta_{m,n}$, and the argument follows similar steps to that given in subcase \hyperref[subcase 1.1]{1.1}.

            If $F_r^c$ induces $P_3$, then, as in subcase \hyperref[subcase 1.1]{1.1}, we obtain two possible scenarios analogous to parts (a) and (b). Both of these can occur in the present subcase. We will not provide a concrete example, since it can be easily constructed by analyzing $F_i^c$ and $F_j^c$. For completeness, we will explicitly state parts (a) and (b) in this case as well, since doing so will make the later arguments easier to understand.

            \begin{enumerate}[label = (\alph*)]
                \item If $x, i_1 \in V_1$ and $j_2 \in V_2$, then $x$ and $i_1$ are the endpoints, each adjacent to $j_2$, the midpoint.
                \item If $x \in V_1$ and $i_1, j_2 \in V_2$, then $i_1$ and $j_2$ are the endpoints, each adjacent to $x$, the midpoint.
            \end{enumerate}

            Since the initial choice $(\lambda, \alpha) = (i_1, j_1)$ was not optimal, we now replace $\alpha = j_1$ with $\alpha = j_2$, keeping $\lambda = i_1$ and the positioning of $x,\ i_1,\ i_2,\ j_1,$ and $j_2$ unchanged in both part (a) and (b). This will give $$F_r^c = \lcu x < i_1 < j_1 \rcu.$$
            
            For parts (a) and (b), we verify that the worst-case scenarios, namely, when $F_r^c$ coincides with some $T_k^c$ and when it induces a $P_3$, cannot occur. The argument for part (a) here is analogous to that in subcase \hyperref[subcase 1.1]{1.1}. We modify the argument for part (b) here.
            
            For part (b), $F_r^c$ cannot be a $T_k^c$ because $x$ and $i_1$ are adjacent, whereas by \Cref{obs: Ti^c is an independent set}, all $T_k^c$ are independent. If instead $F_r^c$ induces a $P_3$, then $j_1$ must be adjacent to either $x$ or $i_1$. It cannot be adjacent to $x$, since that would make $F_j^c$ induce a $P_3$, contradicting the disconnectedness of $F_j^c$. If $j_1$ is adjacent to $i_1$, then $j_1 \in V_1$, since $i_1\in V_2$ (by \Cref{obs: V1 nbhd c}), which forces $j_1 < i_1$, which is not possible here. Also, we do not get any exceptions here as we do in subcase \hyperref[subcase 1.1]{1.1}. Therefore, the pair $\lno \lambda, \alpha \rno$ is optimal.
            
            \item\label{subcase 2.1.2} \textbf{$\bm{j_{1} < i_{2} < j_{2}}$ (that is, $\bm{x < i_{1} < j_{1} < i_{2} < j_{2}}$).}

            Observe that the choice $\lno \lambda, \alpha \rno  = \lno i_2, j_1 \rno$ will never be an optimal pair, since it forces $F_j^c \succc F_r^c$. This subcase follows from the previous subcase.
            

            \item\label{subcase 2.1.3} \textbf{$\bm{j_{1} < j_{2} < i_{2}}$ (that is, $\bm{x < i_{1} < j_{1} < j_{2} < i_{2}}$).}

            Observe that $\lno \lambda, \alpha \rno$ cannot be $\lno i_2, j_1 \rno$ or $\lno i_2, j_2 \rno$ as a choice for the optimal pair, since it forces $F_j^c \succc F_r^c$. The argument then proceeds exactly as in the subcase \hyperref[subcase 2.1.1]{2.1.1}.
        \end{enumerate}

        \item\label{subcase 2.2} \textbf{$\bm{F_{i}^{c} = \lcu  i_{1} < x < i_{2} \rcu}$ and $\bm{F_{j}^{c} = \lcu x < j_{1} < j_{2} \rcu}$.}

        Clearly, $i_1 < x < j_1$ will always hold and thus $F_i^c \succc F_j^c$. Also, observe that $\lno \lambda, \alpha \rno$ cannot be $\lno i_2, x \rno$ as a choice for the optimal pair, since it forces $F_j^c \succc F_r^c$. We only have to look at the different ordering of $i_2$, $j_1$, and $j_2$. This can be done in the following ways:

        \begin{enumerate}[label = \bfseries2.2.\arabic*.]
            \item\label{subcase 2.2.1} \textbf{$\bm{i_{2} < j_{1} < j_{2}}$ (that is, $\bm{i_{1} < x < i_{2} < j_{1} < j_{2}}$).}

            This case is analogous to subcase \hyperref[subcase 1.1]{1.1}, including the exception we get there, and unlike subcase \hyperref[subcase 2.1.1]{2.1.1}, where we do not.

            \item\label{subcase 2.2.2} \textbf{$\bm{j_{1} < i_{2} < j_{2}}$ (that is, $\bm{i_{1} < x < j_{1} < i_{2} < j_{2}}$).}

            Observe that $\lno \lambda, \alpha \rno$ cannot be $\lno i_2, j_1 \rno$ a choice for the optimal pair, since it forces $F_j^c \succc F_r^c$. This subcase proceeds analogously to subcase \hyperref[subcase 1.1]{1.1}, but without any exception, and the reason for this follows from subcase \hyperref[subcase 1.3]{1.3}.
            


            \item\label{subcase 2.2.3} \textbf{$\bm{j_{1} < j_{2} < i_{2}}$ (that is, $\bm{i_{1} < x < j_{1} < j_{2} < i_{2}}$).}

            Observe that $\lno \lambda, \alpha \rno$ cannot be $\lno i_2, j_1 \rno$ or $\lno i_2, j_2 \rno$ as a choice for the optimal pair, since it forces $F_j^c \succc F_r^c$. This subcase proceeds analogously to subcase \hyperref[subcase 1.1]{1.1}, but without any exception, and the reason for this follows from subcase \hyperref[subcase 1.4]{1.4}.
        \end{enumerate}

        \item \textbf{$\bm{F_{i}^{c} = \lcu  i_{1} < x < i_{2} \rcu}$ and $\bm{F_{j}^{c} = \lcu j_{1} < x < j_{2} \rcu}$.}

        In order to have $F_i^c \succc F_j^c$, we must $i_1 < j_1 < x$ here. Similar to the subcase \hyperref[subcase 2.1]{2.1}, $\lno \lambda, \alpha \rno$ cannot be $\lno i_2, x \rno$ as a choice for the optimal pair, since it forces $F_j^c \succc F_r^c$. Therefore, we need to examine the orderings of $i_2$ and $j_2$. This can be done in the following ways,

        \begin{enumerate}[label = \bfseries2.3.\arabic*.]
            \item\label{subcase 2.3.1} \textbf{$\bm{i_{2} < j_{2}}$ (that is, $\bm{i_{1} < j_{1} < x < i_{2} < j_{2}}$).}

            Observe that $\lno \lambda, \alpha \rno$ cannot be $\lno i_2, j_1 \rno$ as a choice for the optimal pair, since it forces $F_j^c \succc F_r^c$. This subcase proceeds analogously to subcase \hyperref[subcase 1.1]{1.1}, but without any exception, and the reason for this follows from subcase \hyperref[subcase 1.3]{1.3}.
            
            \item\label{subcase 2.3.2} \textbf{$\bm{j_{2} < i_{2}}$ (that is, $\bm{i_{1} < j_{1} < x < j_{2} < i_{2}}$).}
            
            Observe that $\lno \lambda, \alpha \rno$ cannot be $\lno i_1, x \rno$ or $\lno i_2, x \rno$ as a choice for the optimal pair, since it forces $F_j^c \succc F_r^c$. Again, this subcase proceeds analogously to subcase \hyperref[subcase 1.1]{1.1}, but without any exception, and the reason for this follows from subcase \hyperref[subcase 1.4]{1.4}. The only modification is in the choice of the pair $\lno \lambda, \alpha \rno$, where the initial choice is $\lno i_1, x \rno$, and if this fails, we then choose $\lno i_1, j_2 \rno$.
        \end{enumerate}

        \item\label{subcase 2.4} \textbf{$\bm{F_{i}^{c} = \lcu  i_{1} < x < i_{2} \rcu}$ and $\bm{F_{j}^{c} = \lcu j_{1} < j_{2} < x\rcu}$.}

        Clearly, $i_1 < j_1 < j_2 < x < i_2$ is the only possibility here. Also, $\lno \lambda, \alpha \rno$ cannot be any of $\lno i_2, j_1 \rno$, $\lno i_2, j_2 \rno$, or $\lno i_1, x \rno$, as a choice for the optimal pair, since this forces $F_j^c \succc F_r^c$. This subcase proceeds analogously to subcase \hyperref[subcase 1.1]{1.1}, but without any exception, and the reason for this follows from subcase \hyperref[subcase 1.4]{1.4}. The only modification is in the choice of the pair $\lno \lambda, \alpha \rno$, where the initial choice is $\lno i_1, j_2 \rno$, and if this fails, we then choose $\lno i_1, x \rno$.

        \item\label{subcase 2.5}\textbf{$\bm{F_{i}^{c} = \lcu  i_{1} < i_{2} < x \rcu}$ and $\bm{F_{j}^{c} = \lcu x < j_{1} < j_{2} \rcu}$}.

        Clearly, $i_1 < i_2 < x < j_1 < j_2$ is the only possibility here. This subcase follows analogously from the subcase \hyperref[subcase 1.1]{1.1}, including the exception we get there.
        
        \item\label{subcase 2.6} \textbf{$\bm{F_{i}^{c} = \lcu  i_{1} < i_{2} < x \rcu}$ and $\bm{F_{j}^{c} = \lcu j_{1} < x < j_{2} \rcu}$.}

        In order to have $F_i^c \succc F_j^c$, we must have $i_1 < j_1$, and $i_2 < x < j_2$ will always hold. So, we only need to see how $i_2$ and $j_1$ are placed. This can be done in the following ways:

        \begin{enumerate}[label = \bfseries2.6.\arabic*.]
            \item\label{subcase 2.6.1} \textbf{$\bm{i_{2} < j_{1}}$ (that is, $\bm{i_{1} < i_{2} < j_{1} < x < j_{2}}$).}

            This subcase follows analogously to the subcase \hyperref[subcase 1.1]{1.1}, including the exception we get there. The only modification is in the choice of the pair $\lno \lambda, \alpha\rno$, where the initial choice is $\lno i_1, x \rno$, and if it fails, we then choose $\lno i_1, j_2 \rno$

            \item\label{subcase 2.6.2} \textbf{$\bm{j_{1} < i_{2}}$ (that is, $\bm{i_{1} < j_{1} < i_{2} < x < j_{2}}$).}

            This case follows the exact same argument given for the previous subcase \hyperref[subcase 2.6.1]{2.6.1}.                
        \end{enumerate}

        \item\label{subcase 2.7} \textbf{$\bm{F_{i}^{c} = \lcu  i_{1} < i_{2} < x \rcu}$ and $\bm{F_{j}^{c} = \lcu j_{1} < j_{2} < x \rcu}$.}

        In order to have $F_i^c \succc F_j^c$, we must have $i_1 < j_1$ here. Also, the position of $x$ is fixed, so we only need to see how $i_2$, $j_1$, and $j_3$ are arranged. We have the following possible scenarios, 

        \begin{enumerate}[label = \bfseries2.7.\arabic*.]
            \item\label{subcase 2.7.1} \textbf{$\bm{i_{2} < j_{1} < j_{2}}$ (that is, $\bm{i_{1} < i_{2} < j_{1} < j_{2} < x}$).}
            
            This subcase follows analogously to the subcase \hyperref[subcase 1.1]{1.1}, including the exception we get there. The only modification is in the choice of the pair $\lno \lambda, \alpha\rno$, where the initial choice is $\lno i_1, j_2 \rno$, and if it fails, we then choose $\lno i_1, x \rno$
            
            \item\label{subcase 2.7.2} \textbf{$\bm{j_{1} < i_{2} < j_{2}}$ (that is, $\bm{i_{1} < j_{1} < i_{2} < j_{2} < x}$).}

            This case follows the exact same argument given for the previous subcase \hyperref[subcase 2.7.2]{2.7.2}.
            
            \item\label{subcase 2.7.3} \textbf{$\bm{j_{1} < j_{2} < i_{2}}$ (that is, $\bm{i_{1} < j_{1} < j_{2} < i_{2} < x}$).}

            This subcase follows analogously to the subcase \hyperref[subcase 1.1]{1.1}, but without any exception, and the reason for this follows from subcase \hyperref[subcase 1.3]{1.3}. The only modification is in the choice of the pair $\lno \lambda, \alpha\rno$, where the initial choice is $\lno i_1, j_2 \rno$, and if it fails, we then choose $\lno i_1, x \rno$
            
        \end{enumerate}
    \end{enumerate}
        
    
        

    \caseheading{Case 3:}
        \textbf{$\bm{\left| \lno F_{i}'\rno^{c} \cap \lno F_{j}' \rno^{c} \right| = 2}$.}

        Let $\lno F_i'\rno^c = \lcu i_1,\ x,\ y \rcu$ and $\lno F_j'\rno^c = \lcu j_1,\ x,\ y \rcu$. In this case, we choose $\alpha = j_1$ and $\lambda = i_1$, that is, we choose $F_i' = F_r'$. The conclusion then follows immediately.

    This completes the proof of \Cref{thm: shellability of hexagonal grid graphs}, showing $\Delta_3 \lno H_{1 \times m \times n} \rno$ is a shellable simplicial complex.
\end{proof}

\section{Spanning facets and the homotopy type of the 3-cut complex of hexagonal tiling}\label{section: 4 (Spanning facets and the homotopy type of the 3-cut complex of hexagonal tiling)}

Let $\mathcal{S}$ denote the set of spanning facets of $\Delta_3\lno H_{1 \times m \times n}\rno$ in the shelling order given in \Cref{defn: new shelling order}. Recall, the equivalent condition given in \hyperref[spanning facets condition]{$(\#)$}, states that $F_k$ is a spanning facet if for each $\lambda \in F_k$, there exists $r < k$, such that
\begin{align*}
    F_r \cap F_k = F_k \setminus \lcu \lambda \rcu.
\end{align*}

We state the main theorem of this section before building the necessary argument to prove it.

\begin{theorem}\label{thm: spanning facets and homotopy type}
    The 3-cut complex of $H_{1 \times m \times n}$ has the homotopy type of wedge of $\psi_{m,n}$ many $\lno 2m+2n+2mn-4 \rno$-dimensional spheres, where, $$\psi_{m,n} = \binom{2m+2n+2mn-1}{2} - \lsq \lno 6m+2 \rno n + (2m-4) \rsq.$$
    In other words,
    $$
    \Delta_3 \lno H_{1 \times m \times n} \rno \simeq \bigvee\limits_{\psi_{m,n}}{\mathbb{S}^{\lno 2m+2n+2mn-4 \rno}},
    $$
    where $\psi_{m,n}$ is the number of spanning facets for the shelling order.
\end{theorem}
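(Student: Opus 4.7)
The plan is to apply \Cref{lemma: homotopy type using shellability} to the shelling order of \Cref{defn: new shelling order}. Each facet has $N - 3$ vertices where $N = 2m + 2n + 2mn$, so $\Delta_3(H_{1 \times m \times n})$ is pure of dimension $N - 4$, and the theorem reduces to showing that the number of spanning facets equals $\binom{N-1}{2} - \delta_{m,n}$, where $\delta_{m,n}$ counts induced $P_3$'s in $H_{1 \times m \times n}$ (\Cref{lemma: number of induced P3 in hexagonal grid graph}).

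Using criterion \hyperref[spanning facets condition]{$(\#)$} together with the lexicographic analysis from the proof of \Cref{thm: shellability of hexagonal grid graphs}, $F_k$ is spanning iff every $\lambda \in F_k$ admits some $\alpha \in F_k^c$ with $\lambda < \alpha$ such that $(F_k^c \setminus \{\alpha\}) \cup \{\lambda\}$ is a facet complement other than any $T_i^c$ (since the $T_i$'s sit at the tail of the shelling, they cannot serve as earlier facets). Taking $\lambda = N$ admits no such $\alpha$, forcing $N \in F_k^c$. Writing $F_k^c = \{a, b, N\}$ with $a < b < N$, any $\lambda \in (b, N)$ is constrained to use $\alpha = N$ and produces the obstruction set $\{a, b, \lambda\}$, while for $\lambda < b$ the Observations \ref{Observation 2}, \ref{obs: V1 nbhd c}, and \ref{obs: Ti^c is an independent set} yield multiple admissible $\alpha$'s, at least one of which avoids obstruction. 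Hence $F_k$ is spanning iff $N \in F_k^c$ and no $\lambda \in (b, N)$ makes $\{a, b, \lambda\}$ an induced $P_3$ or a $T_i^c$.

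I then call a pair $\{a, b\} \subset [N-1]$ \emph{bad} if it is the pair of two smallest elements of some induced $P_3$ or of some $T_i^c$; the characterization above yields $\psi_{m,n} = \binom{N-1}{2} - |\{\text{bad pairs}\}|$. By \Cref{Observation 1} and \Cref{obs: every element of T_i^c is in V2}, bad pairs split into three disjoint classes according to membership in $V_1, V_2$: pairs in $V_1$ (from $P_3$'s with middle in $V_2$), mixed pairs (from $P_3$'s with middle in $V_1$), and pairs in $V_2$ (from $T_i^c$'s). The bipartite girth-six structure of the hexagonal grid forbids $4$-cycles, which makes the ``take two smallest'' map injective on the first and third classes, while in the second class each $V_1$-middle of degree $k$ contributes exactly $k - 1$ distinct pairs; so one pair is lost per degree-$3$ vertex of $V_1$. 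A short bookkeeping yields $|\{\text{bad pairs}\}| = \delta_{m,n} - n_3^1 + \beta_{m,n}$, where $n_3^1$ denotes the number of degree-$3$ vertices in $V_1$.

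The main obstacle is the combinatorial identity $n_3^1 = \beta_{m,n}$: by \Cref{defn: new shelling order} and \Cref{obs: Ti'c is neighborhood of certain vertices}, the $\beta_{m,n}$ centers $c_i$ of the exceptional facets $T_i$ must be shown to enumerate exactly the interior (degree-$3$) vertices of $V_1$ in $H_{1 \times m \times n}$. This follows from a direct vertex-degree count using the explicit labelling in the definition of $H_{1 \times m \times n}$. Once established, $|\{\text{bad pairs}\}| = \delta_{m,n}$ and hence $\psi_{m,n} = \binom{N-1}{2} - \delta_{m,n} = \binom{2m+2n+2mn-1}{2} - [(6m+2)n + (2m-4)]$, completing the proof.
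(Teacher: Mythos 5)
The proposal has a genuine gap, and it is exactly where you flag the ``main obstacle.'' The identity $n_3^1 = \beta_{m,n}$ is \emph{false} for $m \geq 2$. Counting directly from the labelling, the degree-two vertices of $V_1$ are the $m$ vertices of the leftmost column, the $n$ bottom vertices of the remaining columns, and the top vertex of the last column, so $|V_1| - n_3^1 = m+n+1$ and hence $n_3^1 = mn - 1$; but $\beta_{m,n} = mn - 2$ when $m \geq 2$. The discrepancy is the vertex $m+n+mn-1$: it has degree three and lies in $V_1$, but its open neighbourhood contains $z = 2m+2n+2mn$ and hence is not any $T_i^c$ (compare \Cref{obs: Ti'c is neighborhood of certain vertices}, whose centres stop at $m+n+mn-2$). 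Concretely, in $H_{1\times 4\times 6}$ (\Cref{fig: graph of H(1_4_6)}) the $22$ centres are $5$--$8$, $10$--$13$, $15$--$18$, $20$--$23$, $25$--$28$, $31$, $32$, while vertex $33$ is a $23$rd degree-three vertex of $V_1$. With the correct count your formula returns $\delta_{m,n}-1$ rather than $\delta_{m,n}$, so the bookkeeping does not close.

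There is a second gap in the proposed characterization of spanning facets. You claim that for $\lambda < b$ ``at least one [admissible $\alpha$] avoids obstruction,'' so that only $\lambda\in(b,N)$ can destroy spanningness. This fails: obstructions can come from $\lambda$ with $a<\lambda<b$. In $H_{1\times 4\times 6}$ take $F_k^c=\{33,67,68\}$ (so $a=33$, $b=67$, $N=68$) and $\lambda=63$. One has $N(33)=\{63,67,68\}$, so $\alpha=67$ yields $\{33,63,68\}$ and $\alpha=68$ yields $\{33,63,67\}$, both induced $P_3$'s, while $\alpha=33$ yields $\{63,67,68\}$ which violates the ordering. Thus condition \hyperref[spanning facets condition]{$(\#)$} fails at $\lambda=63<b$, and $(33,67)$ is indeed one of the non-spanning pairs listed in \Cref{lemma: explicit description of non-spanning pairs} (Type $6$ with $i=33$), yet your criterion would classify it as spanning. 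Because of these two errors the resulting enumeration of ``bad pairs'' is not sound, even though the final numerical answer happens to agree with the theorem; the paper instead establishes the count by listing the non-spanning pairs explicitly (\Cref{lemma: explicit description of non-spanning pairs}) with a witnessing $\lambda_{x,y}$ for each, and then verifying in \Cref{lemma: all other pairs are spanning pairs} that every remaining pair is spanning.
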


The proof of \Cref{thm: spanning facets and homotopy type} is an immediate consequence of \Cref{lemma: explicit description of non-spanning pairs} and \Cref{lemma: all other pairs are spanning pairs}. We now proceed to prove these lemmas.

Before stating and proving these results, we must explicitly describe the elements of $S$. Unlike approaches based on Möbius inversion of the face lattice, our method explicitly identifies spanning facets, allowing the homotopy type to be read off directly from the shelling. For this, we first give the following important lemma.

\begin{lemma}\label{lemma: z is not in any spanning facet}
    For all $F \in \mathcal{S}$, the vertex $2m+2n+2mn \notin F$, that is, no spanning facet of $\Delta_3\lno H_{1 \times m \times n}\rno$ contains $2m+2n+2mn$.
\end{lemma}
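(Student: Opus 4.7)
I would prove the contrapositive: every facet $F$ containing $z := 2m+2n+2mn$ fails the spanning condition \hyperref[spanning facets condition]{$(\#)$}. The argument splits cleanly according to whether $F$ is one of the regular facets $F_1, \dots, F_{\eta_{m,n}-\beta_{m,n}}$ or one of the relocated facets $T_1, \dots, T_{\beta_{m,n}}$, and crucially the ``bad'' witness $\lambda \in F$ differs in the two cases.

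\textbf{Case 1: $F$ is a regular facet.} I would take $\lambda = z$. Any facet $F_r$ satisfying $F_r \cap F = F \setminus \{z\}$ must arise as $F_r^c = (F^c \setminus \{\mu\}) \cup \{z\}$ for some $\mu \in F^c$. Since $z$ is the maximum label in $[2m+2n+2mn]$, inserting it into the sorted triple always yields a triple that is strictly larger than $F^c$ in the usual lexicographic order on complements; equivalently $F \precc F_r$, so $F_r$ comes strictly after $F$ in the reverse lexicographic portion of \Cref{defn: new shelling order}. Moreover, the explicit formulas defining the $T_j^c$ give $\max T_j^c \leq z-1$ for every $j$, hence $F_r^c \ni z$ cannot coincide with any $T_j^c$; so $F_r$ is itself a regular facet, and its relative position with respect to $F$ is preserved by the relocation of the $T_j$'s. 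No $F_r$ precedes $F$, and $F$ is not spanning.

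\textbf{Case 2: $F = T_i$.} Here $\lambda = z$ is insufficient, because $T_i$ already sits at the tail of the order and the ``later'' $F_r$ produced in Case 1 could in fact be a regular facet appearing much earlier. I would instead take $\lambda = c_i$, the centre of $T_i$; by \Cref{obs: Ti'c is neighborhood of certain vertices} we have $c_i \in V_1$, and hence $c_i \in T_i$ since $T_i^c \subset V_2$ by \Cref{obs: every element of T_i^c is in V2}. Any candidate $F_r^c = (T_i^c \setminus \{\mu\}) \cup \{c_i\}$ then consists of $c_i$ together with two vertices $\mu_1, \mu_2 \in T_i^c \setminus \{\mu\} = \mathcal{N}(c_i) \setminus \{\mu\}$. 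Both $\mu_1$ and $\mu_2$ are adjacent to $c_i$ by construction, while $\mu_1, \mu_2 \in V_2$ are non-adjacent to each other by \Cref{obs: V1 nbhd c}. Thus $F_r^c$ induces a $P_3$ with $c_i$ as its midpoint, disqualifying it as a facet complement. No valid $F_r$ exists at all, so $T_i$ is not spanning.

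\textbf{Main obstacle.} The conceptual difficulty is that $\lambda = z$ cannot serve as a uniform witness: the very relocation of the $T_i$'s that repairs the shelling obstruction of \Cref{lemma: Ti fails in reverse lex ordering} also destroys the straightforward lex-comparison argument for them. The structural identity $T_i^c = \mathcal{N}(c_i)$ is what salvages Case 2, since re-inserting $c_i$ into any two-element subset of its own neighborhood is forced to produce an induced $P_3$. A small preliminary verification that $z \in T_i$ for every $i$ (so Case 2 is genuine) is immediate from the explicit bounds on $\max T_i^c$ read off the defining formulas.
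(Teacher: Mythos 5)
Your proof is correct, and it is in fact more complete than the paper's own treatment of this lemma. Your Case~1 coincides with the paper's argument: replacing any $\alpha \in F^c$ by $z$ produces a candidate $F_r$ whose complement contains the maximal label, which forces $F \precc F_r$, so no such $F_r$ can precede $F$ in the shelling order. Your additional observation that such an $F_r^c$ contains $z$ and therefore cannot coincide with any $T_j^c$ (all of which have maximum $\leq z-1$), so that $F_r$ is a regular facet and the relocation of the $T_j$'s does not disturb its position relative to $F$, is a useful clarification that the paper leaves implicit.

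The genuine contribution of your proposal is Case~2. The paper's proof explicitly restricts to indices $1 \le j \le \eta_{m,n} - \beta_{m,n}$, i.e.\ to regular facets, and relegates the facets $T_i$ to a one-sentence remark asserting that ``using the above lemma, it is clear that none of the $T_i$ \dots\ will be a spanning facet.'' As you correctly diagnose, this is circular: the lemma's statement concerns all spanning facets, but its proof only treats the regular ones. Moreover the witness $\lambda = z$ genuinely fails for $F = T_i$: the candidate $F_r^c = (T_i^c \setminus \{\alpha\}) \cup \{z\}$ is an independent subset of $V_2$ (since $T_i^c$ is independent by \Cref{obs: Ti^c is an independent set} and $z \in V_2$), hence a legitimate facet complement, does not coincide with any $T_j^c$ (none contain $z$), and is therefore a regular facet preceding $T_i$ in \Cref{eqn: explicit shelling order} — so condition~\hyperref[spanning facets condition]{$(\#)$} is actually satisfied at $\lambda = z$ and tells us nothing. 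Your replacement witness $\lambda = c_i$ repairs this: every candidate $F_r^c = \bigl(\mathcal{N}(c_i)\setminus\{\alpha\}\bigr) \cup \{c_i\}$ consists of $c_i$ together with two of its own neighbours, which necessarily induces a $P_3$ centred at $c_i$ and hence is not a facet complement at all, so no valid $F_r$ exists. This supplies the missing half of the argument and makes the lemma's proof rigorous.
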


\begin{proof}
    Let $\lambda = 2m+2n+2mn$ and assume that there exists an $F_j \in \mathcal{S}$, for some $1 \leq j \leq \eta_{m,n} - \beta_{m,n}$, in the \Cref{eqn: explicit shelling order} such that $\lambda \in F_j$. The choice of the upper bound $\eta_{m,n} - \beta_{m,n}$ for $j$ is explained in the remark following the proof. Using \hyperref[spanning facets condition]{$(\#)$}, there exists $k<j$ (in \Cref{eqn: explicit shelling order}) such that $$F_k \cap F_j = F_j \setminus \{\lambda\}.$$
    
     This implies that $F_k = (F_j \setminus \{\lambda\}) \cup \{\alpha\}$, for some $ \alpha \in F_j^c $, and equivalently,
    $$ 
    F_k^c = (F_j^c \setminus \{\alpha\}) \cup \{\lambda\}.
    $$
    
    Note that, $\lambda \notin F_j^c$. Let $F_j^c = \lcu j_1,\ j_2,\ j_3\rcu$ with $j_1 < j_2 < j_3$. Since $\lambda > j_3$, replacing any $\alpha \in F_j^c$ will give $F_j^c \succc F_k^c$ or equivalently $F_j \precc F_k$, which is a contradiction to $k<j$.
\end{proof}

\begin{remark}
    Using the above lemma, it is clear that none of the $T_i$ removed from the reverse lexicographic ordering and appended at the end in the order of their removal will be a spanning facet.
\end{remark}

Let $z = 2m+2n+2mn$. By the \Cref{lemma: z is not in any spanning facet}, it follows that $z$ must always lie in the complement of any spanning facet. That is, for any $F \in \mathcal{S}$, we have $$F^c = \lcu x, y, z \rcu, \quad \text{ where } x, y \in [2m+2n+2mn-1].$$

Therefore, to determine $F$ completely, we must uniquely determine the pair $(x,y)$. Note that there are exactly $$\binom{2m+2n+2mn-1}{2}$$ such pairs, which gives the binomial term appearing in $\psi_{m,n}$. However, not all of these pairs correspond to spanning facets. Some lead to what we call \textit{non-spanning pairs}.

We will explicitly describe these non-spanning pairs and prove that there are exactly $$\lno6m+2 \rno n + (2m-4)$$ of them. This gives the quantity subtracted from the binomial term in $\psi_{m,n}$.

For a better understanding of these non-spanning facets, we make some arrangements. Let $C_1,\ C_2,\dots,\ C_{2m+2n+2mn-2}$ denote the sets containing all possible pairs, that is, for $1\leq i \leq 2m+2n+2mn-2$, define
$$C_i = \lcu  \lno i,j \rno\ \middle|\ j \in \lsq i+1,\ 2m+2n+2mn-1 \rsq \rcu$$

We claim that the list of non-spanning pairs in $C_i$'s is as follows:

\begin{enumerate}
    \item For $1\leq i \leq m-1$, the non-spanning pairs are $\lno i,i+1 \rno$,  $\lno i,i+m \rno$, $\lno i,i+m+1 \rno$ and $\lno i,i+m+n+mn \rno$. So, there are a total of $4(m-1)$ such non-spanning pairs.

    \item For $i=m$, the non-spanning pairs are $\lno i,i+m \rno$, $\lno i,i+m+1 \rno$ and $\lno i,i+m+n+mn \rno$. So, there are a total of 3 such non-spanning pairs.
    
    \item Fix $k_1 \in [n-1]$. For $k_1 m + k_1 \leq i \leq k_1 m + k_1 + m - 1$, the non-spanning pairs are $\lno i,i+1 \rno$, $\lno i,i+m+1 \rno$, $\lno i,i+m+2 \rno$, $\lno i,i+(m+1)n \rno$ and 
    
    $\lno i,i+(m+n+mn)+1 \rno$. So, there are a total of $5m(n-1)$ such non-spanning pairs. 

    \item For $i = k_2(m+1) - 1$, where $k_2 \in [2,n]$, the non-spanning pairs are $\lno i,i+(m+1) \rno$ and $\lno i,i+(m+1)n \rno$. So, there are a total of $2(n-1)$ such non-spanning pairs.

    \item For $n+mn+1\leq i \leq m+n+mn-1$, the non-spanning pairs are $\lno i,i+1 \rno$, $\lno i,i+(m+1)n \rno$, and $\lno i,i+m+n+mn \rno$. So, there are a total of $3(m-1)$ such non-spanning pairs.

    \item For $i = (m+1)n$, the non-spanning pairs are $\lno i,i+1 \rno$, and $\lno i,i+(m+1)n \rno$. So, there are a total of $2$ such non-spanning pairs.

    \item For $i=m+n+mn$, the non-spanning pair is $\lno i,i+(m+1)n \rno$. So, there is 1 such non-spanning pair.

    \item For all 
    \begin{align*}
        i \in &[m+n+mn+1,\ 2m+2n+2mn-m-1] \\
    &\setminus \lno \lcu 2n+2mn \rcu \sqcup \lcu (m+n+mn) + t(m+1)\ \middle|\ t\in \lsq n-1 \rsq \rcu \rno,
    \end{align*} the non-spanning pair is $\lno i,i+m+1 \rno$. So, there are a total of $$(m+n+mn-m-1)-(n) = mn-1$$ such non-spanning pairs.
\end{enumerate}

For all other $i$'s which are not listed above, have no non-spanning pairs, that is, if $$i=\lcu 2n+2mn \rcu,$$ or, 
\begin{align*}
    i \in & \lcu (m+n+mn) + t(m+1)\ \middle|\ t\in \lcu 1,\ 2,\dots,\ n \rcu \rcu\\
    & \cup \lsq m+2n+2mn+1,\ 2m+2n+2mn-2\rsq,
\end{align*} then $C_i$ do not contain any non-spanning pair. The next lemma proves our claim that the above-listed pairs are indeed non-spanning pairs.

\begin{lemma}\label{lemma: explicit description of non-spanning pairs}
    The pairs listed in $C_i$, for $1 \leq i \leq 2m + 2n + 2mn - 2$ are non-spanning pairs, with a total $6mn+2m+2n-6$ of them.
\end{lemma}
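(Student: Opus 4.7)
The plan is to prove the lemma in two stages: first, show that each of the eight families of pairs listed produces a non-spanning facet; second, sum the per-family counts to recover the stated total $6mn+2m+2n-6$.

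For the first stage, I begin with the reduction supplied by \Cref{lemma: z is not in any spanning facet}: any spanning facet has $z := 2m+2n+2mn$ in its complement, so each candidate facet $F$ satisfies $F^c = \{x, y, z\}$ with $x < y < z$. The key structural remark is that when testing the spanning condition via $F_r^c = (F^c \setminus \{\alpha\}) \cup \{\lambda\}$, the choice $\alpha = z$ always produces a complement lexicographically smaller than $F^c$ (because $z$ is the overall maximum), which would place $F_r$ after $F$ in the reverse-lex portion of the shelling order. Consequently, any earlier witness $F_r$ must arise from $\alpha \in \{x, y\}$ with $\lambda$ sufficiently large, and $F_r$ must be a genuine facet (so $F_r^c$ does not induce a $P_3$) and cannot be one of the $T_k$'s relocated to the tail of the shelling order by \Cref{defn: new shelling order}.

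For each of the eight families I then display a specific witness $\lambda \in F$ and check that every candidate $F_r^c$ with $\alpha \in \{x, y\}$ and $F_r^c \succc F^c$ fails in one of two explicit ways: either $F_r^c$ induces a $P_3$ in $H_{1 \times m \times n}$, or $F_r^c$ coincides with the neighborhood $\mathcal{N}(c_k)$ of some center vertex and therefore $F_r = T_k$ lies at the end of the shelling. The geometric ingredients are \Cref{Observation 1}, \Cref{Observation 2}, \Cref{obs: V1 nbhd c}, \Cref{obs: every element of T_i^c is in V2}, and \Cref{obs: Ti^c is an independent set}, which together force $\lambda$ to be constrained by the bipartition $V_1 \sqcup V_2$, by the uniqueness of $P_3$ endpoints, and by the independence of each $T_k^c$. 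These constraints are applied family by family, depending on whether $x, y$ sit on the top boundary (items 1--2), along an interior column (items 3--6), on the bottom boundary (items 5, 7), or in the second half of the vertex set covered by item 8; in each regime the witness $\lambda$ is adapted to the local adjacency around $x$ and $y$, with the $T_k$-type obstruction dominating for interior pairs and the $P_3$-type obstruction dominating for boundary pairs.

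For the second stage, the per-item totals $4(m-1),\ 3,\ 5m(n-1),\ 2(n-1),\ 3(m-1),\ 2,\ 1,\ mn-1$ are aggregated and simplified, with the degenerate ranges that arise when $m = 1$ or $n = 1$ handled separately by direct inspection (several families are then vacuous and do not contribute), so as to recover the stated total $6mn + 2m + 2n - 6$. The chief obstacle is not any individual family but the breadth and uniformity of the case analysis in stage one: each family is geometrically distinct, and the matching of failure mechanism ($P_3$ versus $T_k$) to the replacement $\alpha$ and the witness $\lambda$ must be carried out carefully to avoid overlooked candidates. Once this organization is in place, the aggregation in stage two is routine arithmetic.
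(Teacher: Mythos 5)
Your proposal has the reverse-lexicographic order running in the wrong direction, and this reversal flips the entire logical structure of the argument. In the paper's convention a facet $F$ appears \emph{earlier} in the shelling order exactly when $F^c$ is lexicographically \emph{smaller} (for instance, $F_1'^c = \{1,2,3\}$ is the lex-smallest complement, and the proof of \Cref{lemma: z is not in any spanning facet} reads off $F_j^c \succc F_k^c$ precisely from $\lambda = z$ being larger than every element of $F_j^c$). Hence the choice $\alpha = z$ produces $F_r^c = \{x,y,\lambda\}$ with $\lambda < z$, which is always lex-smaller than $\{x,y,z\}$ and therefore places $F_r$ \emph{before} $S_{x,y}$, not after. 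You claim the opposite, and then conclude that ``any earlier witness $F_r$ must arise from $\alpha \in \{x,y\}$ with $\lambda$ sufficiently large,'' which is exactly backwards: for $\lambda$ large, $\alpha \in \{x,y\}$ gives a lex-\emph{larger} complement and hence a \emph{later} facet.

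This matters because the paper's proof chooses $\lambda_{x,y}$ larger than both $x$ and $y$ precisely so that the swaps $\alpha = x$ and $\alpha = y$ are discarded by the lex argument alone, leaving $\alpha = z$ as the only candidate witness; all the $P_3$ and $T_k$ obstructions are then applied to $F_r^c = \{x,y,\lambda_{x,y}\}$. Your proposal instead dismisses $\alpha = z$ and proposes to apply the $P_3$/$T_k$ obstructions to the $\alpha \in \{x,y\}$ candidates. But those candidates cannot be ruled out that way: when $\alpha \in \{x,y\}$ the complement $F_r^c$ still contains $z$, and no $T_k^c$ contains $z$ (the largest element of any $T_k^c$ is at most $2m+2n+2mn-1$), so the $T_k$-type obstruction is vacuous there; and the $P_3$-type obstruction need not hold either. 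The proof as outlined would therefore leave the one genuinely dangerous swap, $\alpha = z$, completely unexamined and would certify nothing. As a smaller point, the per-family counts you list sum to $6mn+2m+2n-4$ (the quantity that actually appears in $\psi_{m,n}$), not to the $6mn+2m+2n-6$ stated in the lemma; the discrepancy is in the source as well, but your stage-two aggregation cannot ``recover'' the stated figure from those summands.
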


\begin{proof}
    Recall that, using the condition \hyperref[spanning facets condition]{$(\#)$}, we know that if $F$ is a spanning facet for a given shelling order, then for each $\lambda \in F$, there exists an $k$ such that $F_k \precc F$ and $$F_k\ \cap\ F = F \setminus \lcu \lambda \rcu.$$

    In other words, $$F_k = \lno F \cup \lcu \alpha \rcu \rno \setminus \lcu \lambda \rcu,$$ where, $\alpha \in F^c$. 

    Recall that $z = 2m+2n+2mn$. If the complement of any of the above pairs, along with the $z$, does not form a spanning facet, then the condition in \hyperref[spanning facets condition]{$(\#)$} must fail. Thus, to prove that the above pairs are non-spanning, it suffices to find a vertex in the facets formed by a pair defined above that fails \hyperref[spanning facets condition]{$(\#)$}. Let this vertex be $\lambda_{x,y}$ for the pair $\lno x, y \rno$. Also, let $S_{x,y}$ denote the facets formed by the pair $\lno x, y \rno$ (that is, $S_{x,y}^c = \lcu x, y, z \rcu$), with $j_{x,y}$ being the position of $S_{x,y}$ in the \Cref{eqn: explicit shelling order}. In other words, $S_{x,y} = F_{j_{x,y}}$, where $1 \leq j_{x,y} \leq \eta_{m,n} - \beta_{m,n}$.
    
    We group the non-spanning pairs defined above into the following types and, for each case, explicitly define a vertex $\lambda_{x,y}$ for which the spanning condition fails for $S_{x,y}$.
    
    \begin{enumerate}[leftmargin = *, label = \textbf{Type} \textbf{\arabic*}.]
        \item\label{Type 1} \textbf{The pairs of the form $(i,i+1)$.}
        
        We define $\lambda_{i,i+1}$ for these pairs as follows, based on the indices where $i$ varies,

        \begin{enumerate}
            \item For $1 \leq i \leq m-1$ or $\lno m+1 \rno n \leq i \leq m+n+mn-1$, we have, $$\lambda_{i,i+1} = i+m+n+mn+1.$$

            \item For some fixed $k_1 \in \lsq n-1 \rsq$, and $k_1m + k_1 \leq i \leq k_1m+k_1+m-1$, we have
            $$\lambda_{i,i+1} = i+m+n+mn+2.$$
        \end{enumerate}
        
        \item\label{Type 2} \textbf{The pairs of the form $(i,i+m)$.}
        
        For $1 \leq i \leq m$, we define $\lambda_{i,i+m}$ for these pairs as, $$\lambda_{i,i+m} = \lno m + 1 \rno n + i.$$

        \item\label{Type 3} \textbf{The pairs of the form $(i,i+m+1)$.}
        
        We define $\lambda_{i,i+m+1}$ for these pairs as follows, based on the indices where $i$ varies,

        \begin{enumerate}
            \item\label{Type 3(a)} For $1 \leq i \leq m$ or $i = k_2 \lno m+1 \rno -1$, where $k_2 \in \lsq 2,\ n \rsq$, we have, $$\lambda_{i,i+m+1} = i+m+n+mn+1.$$

            \item\label{Type 3(b)} For some fixed $k_1 \in \lsq n-1 \rsq$, and $k_1m + k_1 \leq i \leq k_1m+k_1+m-1$, we have
            $$\lambda_{i,i+m+1} = i+m+n+mn+2.$$

            \item\label{Type 3(c)} For all, 
            \begin{align*}
                i \in &[m+n+mn+1,\ 2m+2n+2mn-m-1] \\
                &\setminus \lno \lcu 2n+2mn \rcu \sqcup \lcu (m+n+mn) + t(m+1)\ \middle|\ t\in \lcu 1,\ 2,\dots,\ n \rcu \rcu \rno,
                \end{align*} we have,
                $$\lambda_{i,i+m+1} = i+m+2.$$
        \end{enumerate}

        \item \textbf{The pairs of the form $(i,i+m+2)$.}
        
        Fix $k_1 \in \lsq n-1 \rsq$. For $k_1m + k_1 \leq i \leq k_1m+k_1+m-1$, we define $\lambda_{i,i+m+2} $ for these pairs as, $$\lambda_{i,i+m+2} = i+m+n+mn+2.$$

        \item \textbf{The pairs of the form $\lno i,i+\lno m+1 \rno n \rno$.}
        
        We define $\lambda_{i,i+(m+1)n}$ for these pairs as follows, based on the indices where $i$ varies,

        \begin{enumerate}
            \item Fix $k_1 \in \lsq n-1 \rsq$. For $k_1m + k_1 \leq i \leq k_1m+k_1+m-1$, we have the following two choices for $\lambda_{i,i+(m+1)n}$,
            $$\lambda_{i,i+(m+1)n} = i+m+n+mn+1 \quad \text{or} \quad \lambda_i = i+m+n+mn+2.$$
            
            \item For $i = k_2 \lno m+1 \rno -1$, where $k_2 \in \lsq 2,\ n \rsq$, or $i=\lno m+1 \rno$, we have, $$\lambda_{i,i+(m+1)n} = i+m+n+mn+1.$$

            \item For $n + mn + 1 \leq i \leq m+n+mn-1$, we have,
            $$\lambda_{i,i+(m+1)n} = i+m+n+mn \quad \text{or} \quad \lambda_{i,i+(m+1)n} = i+m+n+mn+1.$$

            \item For $i=m+n+mn$, we define $\lambda_{i,i+(m+1)n}$ as follows,
            $$\lambda_{i,i+(m+1)n} = i+m+n+mn.$$
        \end{enumerate}

        \item \textbf{The pairs of the form $(i,i+m+n+mn)$.}
        
        For $1 \leq i \leq m$, or $n+mn+1 \leq i \leq m+n+mn-1$, we define $\lambda_{i,i+m+n+mn}$ for these pairs as, $$\lambda_{i,i+m+n+mn} = i+m+n+mn+1.$$

        \item \textbf{The pairs of the form $(i,i+m+n+mn+1)$.}
        
        Fix $k_1 \in \lsq n-1 \rsq$. For $k_1m + k_1 \leq i \leq k_1m+k_1+m-1$, we define $\lambda_{i,i+m+n+mn+1}$ for these pairs as,
        $$\lambda_{i,i+m+n+mn+1} = i+m+n+mn+2.$$
    \end{enumerate}

    We now analyze the possible $\alpha$ for all the pairs listed above. For all the $\lambda_{x,y}$ defined above for the pairs $\lno x, y \rno$, observe that $\lambda_{x,y} > x$ and $\lambda_{x,y} > y$. So, if we choose $\alpha$ to be either $x$ or $y$, then $S_{x,y} \precc F_k$, contradicting $k < j_{x,y}$.

    Now, if $\alpha = z$, then we first deal with \hyperref[Type 3(c)]{Type 3(c)} and then the rest. For \hyperref[Type 3(c)]{Type 3(c)}, $$F_k^c = \lcu i,\ i+m+1,\ \lambda_{x,y} \rcu = \lcu i,\ i+m+1,\ i+m+2 \rcu,$$ where the range of $i$ is specified in \hyperref[Type 3(c)]{Type 3(c)}. In this case, $F_k = \mathcal{N}\lno i - (m+1)n\rno$, which is one of the $T_i$, implying $S_{x,y} \precc F_k$ since $T_i$ are placed at the end. For all other types, $S_{x,y}^c$ is connected (that is, it induces a path $P_3$), contradicting $F_k^c$ being disconnected.

    Hence, all the pairs defined above are non-spanning pairs. Moreover, summing up all of these non-spanning pairs gives a total of $6mn+2m+2n-6$.
\end{proof}

Observe that the pairs that were not listed above as non-spanning are indeed spanning. This follows from two facts. First, the analysis shown in the previous proof explicitly lists out all those pairs for which the resulting facet $S_{x,y}$ will fail \hyperref[spanning facets condition]{$(\#)$} because 
\begin{enumerate}[label = \roman*.]
\itemsep0em
    \item any constructed $F_k$ is forced to appear after $S_{x,y}$,
    \item any constructed $F_k^c$ is connected (induces a $P_3$), or
    \item any constructed $F_k$ coincides with one of the removed facets $T_i$.
\end{enumerate} 

Second, for any remaining pair $\{x,y\}$, one can construct a valid $F_k$ by choosing a suitable $\alpha \in S_{x,y}^c$ that will avoid all three obstructions listed above for some $\lambda \in S_{x,y}$. The next result proves that the pairs given in types 1 to 7 in the proof of \Cref{lemma: explicit description of non-spanning pairs} are the only non-spanning pairs.

\begin{lemma}\label{lemma: all other pairs are spanning pairs}
    All pairs that are not listed in $C_i$, for $1 \leq i \leq 2m + 2n + 2mn -2$ are spanning pairs.
\end{lemma}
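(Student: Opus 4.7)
The goal is to verify the spanning condition \hyperref[spanning facets condition]{$(\#)$} for every pair $(x, y)$ not appearing in the seven non-spanning types enumerated in \Cref{lemma: explicit description of non-spanning pairs}. Setting $z = 2m + 2n + 2mn$, \Cref{lemma: z is not in any spanning facet} forces $S_{x,y}^c = \{x, y, z\}$ for any candidate spanning facet, so the task reduces to exhibiting, for each $\lambda \in V \setminus \{x, y, z\}$, a vertex $\alpha \in \{x, y, z\}$ such that $F_k^c := (\{x, y, z\} \setminus \{\alpha\}) \cup \{\lambda\}$ is (a) disconnected, (b) positioned before $S_{x,y}$ in the shelling order of \Cref{defn: new shelling order}, and (c) distinct from every $T_i^c$.

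The first step is to treat $\alpha = z$ as the natural default: for this choice $F_k^c = \{x, y, \lambda\}$, and since $\lambda < z$ the reverse lexicographic comparison with $S_{x,y}^c$ automatically yields $F_k \precc S_{x,y}$, so condition (b) holds for free. Hence $\alpha = z$ can fail only when (a) $\{x, y, \lambda\}$ induces a $P_3$, or (c) $\{x, y, \lambda\}$ equals some $T_i^c = \mathcal{N}(c_i)$. By \Cref{Observation 2} (uniqueness of $P_3$ endpoints) and \Cref{obs: Ti'c is neighborhood of certain vertices} (explicit form of $T_i^c$), the set of problematic $\lambda$ for a given $(x, y)$ is small and explicitly determined by the local grid geometry around $x$ and $y$.

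The second step is to resolve each obstructing $\lambda$ by falling back to $\alpha = y$ (valid whenever $\lambda < y$) or $\alpha = x$ (valid whenever $\lambda < x$), producing $F_k^c = \{x, z, \lambda\}$ or $\{y, z, \lambda\}$. Both alternatives contain $z$, and since $z \notin T_i^c$ for any $i$ (an immediate consequence of \Cref{obs: every element of T_i^c is in V2} combined with the explicit ranges in \Cref{obs: Ti'c is neighborhood of certain vertices}), condition (c) is automatic. For disconnectedness, I would use the fact that $z$ has only two neighbors in $H_{1\times m\times n}$, namely $m + n + mn$ and $m + n + mn - 1$, so triples of the form $\{x, z, \lambda\}$ or $\{y, z, \lambda\}$ induce a $P_3$ only in a very restricted set of configurations that can be ruled out directly by inspection.

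The heart of the proof is a case analysis organized by the position of $(x, y)$ in the grid, parallel to but opposite in spirit to the one in \Cref{lemma: explicit description of non-spanning pairs}: for each $(x, y)$ not in the seven listed types, one verifies that the obstructing $\lambda$'s of the previous paragraph always satisfy $\lambda < y$ or $\lambda < x$, and that the corresponding alternative $\alpha$ indeed produces a valid disconnected triple. I expect the main obstacle to be the bookkeeping near the boundary of the grid, where multiple alternatives may need to be inspected in tandem to cover all obstructing $\lambda$; however, the seven non-spanning types in \Cref{lemma: explicit description of non-spanning pairs} are \emph{precisely} those configurations in which no alternative works for at least one $\lambda$, so the present lemma amounts to the contrapositive of that exhaustive enumeration, and the construction of a valid $F_k$ succeeds for every remaining pair.
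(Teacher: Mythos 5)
Your proposal takes essentially the same approach as the paper: default to $\alpha = z$ (which satisfies the ordering requirement automatically), and when the resulting triple $\{x,y,\lambda\}$ induces a $P_3$ or coincides with some $T_i^c$, fall back to $\alpha = y$ or $\alpha = x$, using the independence of each $T_i^c$ and the fact that $z$ has only two neighbors to rule out the remaining obstructions. The paper's proof organizes the residual case analysis by the distance between $x$ and $y$ (adjacent, at distance two, at distance greater than two) rather than by grid position, and it records one explicit exception (the pair $(m+2n+2mn-1,\, m+2n+2mn)$ with $\lambda = m+n+mn-1$, resolved by switching to $\alpha = x$), but the mechanism and structural facts invoked are the same as in your sketch.
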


\begin{proof}
    Let $\lno a,b \rno$ be a pair that does not fit into any of the types $1$ through $7$ described in the proof of \Cref{lemma: explicit description of non-spanning pairs}. To show that $S_{a,b}$ is a spanning facet, it suffices to verify that for each $\lambda \in S_{a,b}$ there exists a facet $F_r$ with $r < j_{a,b}$ (equivalently, $F_r \precc S_{a,b}$) satisfying \hyperref[spanning facets condition]{$(\#)$}.

    Observe that if the pair $\lno a,b \rno$ does not belong to any of the types $1$ through $7$, then exactly one of the following three cases must occur.

    \caseheading{Case 1:} The vertices $a$ and $b$ are adjacent, that is, $\lcu a, b \rcu \in E\lno H_{1 \times m \times n} \rno$.

    It follows from the description of non-spanning pairs given in types $1$ through $7$ that this case can occur only when the pair $\lno a, b \rno$ admits one of the following forms:

    \begin{enumerate}[label = \roman*.]
        \item $\lno a, b \rno = \lno i, i + m + n + mn + 1 \rno$, when $i \in [m] \sqcup \lsq n + mn,\ m + n + mn - 2\rsq$; or
        
        \item $\lno a, b \rno = \lno i, i + m + n + mn + 2 \rno$, when $i \in \lsq k_1 m + k_1,\ \lno k_1 + 1 \rno m + k_1 - 1 \rsq$, for all $k_1 \in [n-1]$.
    \end{enumerate}

    Note that, for any $\lambda \in S_{a,b}$ we choose to swap with $\alpha \in S_{a, b}^c$, the resulting $F_r^c$ will never coincide with any $T_i^c$, for $1 \leq i \leq \beta_{m,n}$. This is because $a$ and $b$ are adjacent to each other, whereas by \Cref{obs: Ti^c is an independent set}, each $T_i^c$ is an independent set.

    Let $\lambda \in S_{a,b}$. First, suppose that $\lambda$ is adjacent to neither $a$ nor $b$. In this subcase, we choose $\alpha = z$, which yields $$ F_r^c = \lcu a, b, \lambda \rcu.$$ Since every element of $S_{a,b}$ is smaller than $z$, we have $F_r^c \succc S_{a,b}^c$. Moreover, the $F_r^c$ obtained here does not induce a $P_3$, as $\lambda$ is adjacent to neither $a$ nor $b$.

    Now suppose that $\lambda$ is adjacent to either $a$ or $b$. In this case, choosing $\alpha = z$ is not possible, since the resulting $F_r^c$ would induce a $P_3$. Before choosing $\alpha$, we observe that in all the possible forms of $\lno a, b \rno$ listed above, we have $a \in V_1$ and $b \in V_2$, and hence $a < b$. In fact, $b$ is the largest element in the neighborhood of $a$, that is, $b = \max \{N\lno a \rno\}$.

    For this subcase, we choose $\alpha = b$. It suffices to show that $\lambda < b$, which guarantees $F_r^c \succc S_{a,b}^c$. Indeed, if $\lambda$ is adjacent to $a$, then by the above observation $\lambda < b$; and if $\lambda$ is adjacent to $b$, then by \Cref{obs: V1 nbhd c}, we have $\lambda \in V_1$, again implying $\lambda < b$. Finally, the resulting $F_r^c$ does not induce a $P_3$, since in all the cases listed above the vertex $b$ is at a distance of at least $2$ from $z$.
    
    \caseheading{Case 2:} The vertices $a$ and $b$ are at a distance of two from each other, that is, $a$ and $b$ are endpoints of an induced $P_3$.

    This case is only possible when $\lno a, b\rno$ admits the following forms:

    \begin{enumerate}[label =\roman*.]
        \item $\lno a, b \rno = \lno i, i+1 \rno$, when $$i \in \lsq \lno k_2 - 1 \rno \lno m + 1 \rno + m + n + mn + 1, k_2 \lno m + 1 \rno + m + n + mn - 1 \rsq,$$
        for each $k_2 \in \lsq n \rsq$, or when $$i \in \lsq m + 2n + 2mn,\ 2m + 2n + 2mn - 2 \rsq.$$

        \item $\lno a, b \rno = \lno i, i+m+1 \rno$, when $$i = m + n + mn + k_3 \lno m+1 \rno,$$ for all $k_3 \in \lsq n-1 \rsq$, or when, $$i \in \lsq 2n + 2mn, m + 2n + 2mn - 2 \rsq.$$

        \item $\lno a, b \rno = \lno i, i+m+2 \rno$, when $$i \in \lsq \lno k_3 - 1 \rno \lno m + 1 \rno + m + n + mn + 1, k_3 \lno m + 1 \rno + m + n + mn - 1 \rsq,$$
        for each $k_3 \in \lsq n-1\rsq$.
    \end{enumerate}

    Before proceeding, we record some observations. In all the above cases, both $a$ and $b$ belong to $V_2$. Hence, by \Cref{obs: V1 nbhd c}, every vertex adjacent to either $a$ or $b$ must lie in $V_1$.

    Let $\lambda \in S_{a,b}$. We consider the following two subcases.

    \begin{enumerate}
        \item $\lambda$ is adjacent to both $a$ and $b$.
    
        In this situation, choosing $\alpha = z$ is not possible, since the resulting $F_r^c$ will induce a $P_3$. Instead, we choose $\alpha = b$. Since $\lambda$ is adjacent to $b$, we have $\lambda < b$, and hence $F_r^c \succc S_{a,b}^c$.
    
        Moreover, $F_r^c$ cannot coincide with any $T_i^c$, as $\lambda$ is adjacent to $a$ and each $T_i^c$ is an independent set by \Cref{obs: Ti^c is an independent set}. Finally, $F_r^c$ does not induce $P_3$, since $a$ and $z$ are always at a distance greater than two. The only exception when $F_r^c$ induces $P_3$ occurs when $\lambda = m+n+mn-1$ and $$\lno a, b \rno = \lno m+2n+2mn-1,\; m+2n+2mn \rno.$$
        This exceptional case can be handled by choosing $\alpha = a$ instead of $b$, while keeping all other arguments unchanged.
    
        \item $\lambda$ is at a distance of at least two from at least one of $a$ or $b$.
    
        First, suppose that $\lcu a, b, \lambda \rcu$ coincides with some $T_i^c$. In this case, choosing $\alpha = z$ is not possible, as it would yield $F_r^c = T_i^c$. We therefore choose $\alpha = b$. As in the previous subcase, this choice guarantees $F_r^c \succc S_{a,b}^c$.
    
        Moreover, the resulting $F_r^c$ cannot coincide with any $T_i^c$, since it contains $z$ and no $T_i^c$ contains $z$. Also, $F_r^c$ does not induce a $P_3$, as $a$ is adjacent to neither $\lambda$ nor $z$.
    
        If the above situation does not arise, then we may safely choose $\alpha = z$. This again yields $F_r^c \succc S_{a,b}^c$, and the resulting $F_r^c$ neither coincides with any $T_i^c$ nor induces a $P_3$, by arguments analogous to those above.
    \end{enumerate}

    \caseheading{Case 3:} The vertices $a$ and $b$ are at distance greater than two from each other.

    We do not list explicit types in this case, since it consists of all pairs $\lno a,b \rno$ that were not covered earlier. Here, we choose $\alpha = z$. Then the resulting facet $F_r^c$ satisfies $F_r^c \succc S_{a,b}^c$, as $\lambda < z$ for every $\lambda \in S_{a,b}$. 

    As in the previous cases, the facet $F_r^c$ obtained here neither coincides with any $T_i^c$ nor induces a $P_3$. Both assertions follow from arguments analogous to those used above.

    This completes the proof of \Cref{lemma: all other pairs are spanning pairs}
\end{proof}

Using \Cref{lemma: explicit description of non-spanning pairs} and \Cref{lemma: all other pairs are spanning pairs}, the proof of \Cref{thm: spanning facets and homotopy type} follows immediately.

\section{Future directions}

Motivated by our results on the shellability of the $3$-cut complex of hexagonal grid graphs, we investigated whether similar results hold for higher values of $k$. Computational experiments using \texttt{SageMath} suggest that similar behavior persists for $k \geq 3$, particularly for hexagonal line tilings. This leads to the following question.

\begin{ques}
    Is the $k$-cut complex of the hexagonal line tiling shellable for all $k \geq 3$?
\end{ques}

A natural extension of this question is to ask whether a similar shellability result holds for the $k$-cut complex of the general hexagonal grid graphs $H_{r \times s \times t}$.

We now reframe this question from a structural perspective. Observe that hexagonal grid graphs are bipartite, as can be seen from the vertex labeling given earlier for $H_{1 \times m \times n}$. Moreover, Bayer et al. proved shellability results for $k$-cut complexes of various square grid graphs, which are also bipartite.

Motivated by these observations, we ask the following.

\begin{ques}
    Is the $k$-cut complex of a bipartite graph shellable?
\end{ques}

This question relates to earlier work of Bayer et al., who established shellability for complete bipartite and complete multipartite graphs. Extending these results to general bipartite graphs would lead to a better understanding of how bipartite structure influences the shellability of cut complexes.

\section*{Acknowledgements}
The author gratefully acknowledges the assistance provided by the Council of Scientific and Industrial Research (CSIR), India, through grant 09/1237(15675)/2022-EMR-I. The author also sincerely thanks Dr. Anurag Singh, the author’s supervisor, for his valuable feedback and continued support.

\nocite{}
\printbibliography 

@article{bayer2024total,
  title={Total cut complexes of graphs},
  author={Bayer, Margaret and Denker, Mark and Jeli\'c{} Milutinovi\'c, Marija and Rowlands, Rowan and Sundaram, Sheila and Xue, Lei},
  journal={Discrete Comput. Geom.},
  FJOURNAL = {Discrete \& Computational Geometry},
  year={2024},
  publisher={Springer}, 
  url={https://doi.org/10.1007/s00454-024-00630-4},
}

@article {bayer2024cut,
    AUTHOR = {Bayer, Margaret and Denker, Mark and Jeli\'c{} Milutinovi\'c,
              Marija and Rowlands, Rowan and Sundaram, Sheila and Xue, Lei},
     TITLE = {Topology of cut complexes of graphs},
   JOURNAL = {SIAM J. Discrete Math.},
  FJOURNAL = {SIAM Journal on Discrete Mathematics},
    VOLUME = {38},
      YEAR = {2024},
    NUMBER = {2},
     PAGES = {1630--1675},
      ISSN = {0895-4801,1095-7146},
   MRCLASS = {57M15 (05C69 05E18 05E45 57Q70)},
  MRNUMBER = {4750271},
       DOI = {10.1137/23M1569034},
       URL = {https://doi.org/10.1137/23M1569034},
}

@article{shellabilityNPcomplete,
author = {Goaoc, Xavier and Pat\'{a}k, Pavel and Pat\'{a}kov\'{a}, Zuzana and Tancer, Martin and Wagner, Uli},
title = {Shellability is NP-complete},
year = {2019},
issue_date = {June 2019},
publisher = {Association for Computing Machinery},
address = {New York, NY, USA},
volume = {66},
number = {3},
issn = {0004-5411},
url = {https://doi.org/10.1145/3314024},
doi = {10.1145/3314024},
abstract = {We prove that for every d ≥ 2, deciding if a pure, d-dimensional, simplicial complex is shellable is NP-hard, hence NP-complete. This resolves a question raised, e.g., by Danaraj and Klee in 1978. Our reduction also yields that for every d ≥ 2 and k ≥ 0, deciding if a pure, d-dimensional, simplicial complex is k-decomposable is NP-hard. For d ≥ 3, both problems remain NP-hard when restricted to contractible pure d-dimensional complexes. Another simple corollary of our result is that it is NP-hard to decide whether a given poset is CL-shellable.},
journal = {J. ACM},
month = jun,
articleno = {21},
numpages = {18},
keywords = {NP-completeness, Shellability, collapsibility, simplicial complexes}
}

@article {bayer2024cutB,
    AUTHOR = {Bayer, Margaret and Denker, Mark and Jeli\'c{} Milutinovi\'c, Marija and Sundaram, Sheila and Xue, Lei},
     TITLE = {Topology of Cut Complexes {II}},
   JOURNAL = {SIAM J. Discrete Math.},
  FJOURNAL = {SIAM Journal on Discrete Mathematics},
    VOLUME = {39},
      YEAR = {2025},
    NUMBER = {2},
       DOI = {10.1137/24M167607},
       URL = {https://doi.org/10.1137/24M1676077},
}

@article {dochter12,
    AUTHOR = {Dochtermann, Anton and Engstr\"om, Alexander},
     TITLE = {Cellular resolutions of cointerval ideals},
   JOURNAL = {Math. Z.},
  FJOURNAL = {Mathematische Zeitschrift},
    VOLUME = {270},
      YEAR = {2012},
    NUMBER = {1-2},
     PAGES = {145--163},
      ISSN = {0025-5874,1432-1823},
   MRCLASS = {13F55 (05C90 05E40 13D02)},
  MRNUMBER = {2875826},
MRREVIEWER = {Adam\ L.\ Van Tuyl},
       DOI = {10.1007/s00209-010-0789-z},
       URL = {https://doi.org/10.1007/s00209-010-0789-z},
}

@article {hibi92,
    AUTHOR = {Hibi, Takayuki},
     TITLE = {Combinatorics of simplicial complexes and complex polyhedra},
   JOURNAL = {S\=ugaku},
  FJOURNAL = {Mathematical Society of Japan. S\=ugaku (Mathematics)},
    VOLUME = {44},
      YEAR = {1992},
    NUMBER = {2},
     PAGES = {147--160},
      ISSN = {0039-470X,1883-6127},
   MRCLASS = {52B20 (14M25 52B05)},
  MRNUMBER = {1193020},
}

@book {Stan96,
    AUTHOR = {Stanley, Richard P.},
     TITLE = {Combinatorics and commutative algebra},
    SERIES = {Progress in Mathematics},
    VOLUME = {41},
   EDITION = {Second},
 PUBLISHER = {Birkh\"auser Boston, Inc., Boston, MA},
      YEAR = {1996},
     PAGES = {x+164},
      ISBN = {0-8176-3836-9},
   MRCLASS = {05-02 (13F50)},
  MRNUMBER = {1453579},
MRREVIEWER = {Volkmar\ Welker},
}

@incollection {hoch16,
    AUTHOR = {Hochster, Melvin},
     TITLE = {Cohen-{M}acaulay varieties, geometric complexes, and
              combinatorics},
 BOOKTITLE = {The mathematical legacy of {R}ichard {P}. {S}tanley},
     PAGES = {219--229},
 PUBLISHER = {Amer. Math. Soc., Providence, RI},
      YEAR = {2016},
      ISBN = {978-1-4704-2724-5},
   MRCLASS = {13F55 (05E45 14M15 14M25)},
  MRNUMBER = {3618036},
MRREVIEWER = {Matteo\ Varbaro},
       DOI = {10.1090//mbk/100/13},
       URL = {https://doi.org/10.1090//mbk/100/13},
}

@incollection {van13,
    AUTHOR = {Van Tuyl, Adam},
     TITLE = {A beginner's guide to edge and cover ideals},
 BOOKTITLE = {Monomial ideals, computations and applications},
    SERIES = {Lecture Notes in Math.},
    VOLUME = {2083},
     PAGES = {63--94},
 PUBLISHER = {Springer, Heidelberg},
      YEAR = {2013},
      ISBN = {978-3-642-38741-8; 978-3-642-38742-5},
   MRCLASS = {13F20 (13D02)},
  MRNUMBER = {3184120},
MRREVIEWER = {Fahimeh\ Khosh-Ahang Ghasr},
       DOI = {10.1007/978-3-642-38742-5\_3},
       URL = {https://doi.org/10.1007/978-3-642-38742-5_3},
}

@article {Dochter09,
    AUTHOR = {Dochtermann, Anton and Engstr\"om, Alexander},
     TITLE = {Algebraic properties of edge ideals via combinatorial
              topology},
   JOURNAL = {Electron. J. Combin.},
  FJOURNAL = {Electronic Journal of Combinatorics},
    VOLUME = {16},
      YEAR = {2009},
    NUMBER = {2},
     PAGES = {Research Paper 2, 24},
      ISSN = {1077-8926},
   MRCLASS = {13F55 (05C10 13D02)},
  MRNUMBER = {2515765},
MRREVIEWER = {Christopher\ A.\ Francisco},
       DOI = {10.37236/68},
       URL = {https://doi.org/10.37236/68},
}

@article {ER98,
    AUTHOR = {Eagon, John A. and Reiner, Victor},
     TITLE = {Resolutions of {S}tanley-{R}eisner rings and {A}lexander
              duality},
   JOURNAL = {J. Pure Appl. Algebra},
  FJOURNAL = {Journal of Pure and Applied Algebra},
    VOLUME = {130},
      YEAR = {1998},
    NUMBER = {3},
     PAGES = {265--275},
      ISSN = {0022-4049,1873-1376},
   MRCLASS = {13D25 (13D02 13H10)},
  MRNUMBER = {1633767},
MRREVIEWER = {Ralf\ Fr\"oberg},
       DOI = {10.1016/S0022-4049(97)00097-2},
       URL = {https://doi.org/10.1016/S0022-4049(97)00097-2},
}

@incollection {froberg1990,
    AUTHOR = {Fr{\"o}berg, Ralf},
     TITLE = {On {S}tanley-{R}eisner rings},
 BOOKTITLE = {Topics in algebra, {P}art 2 ({W}arsaw, 1988)},
    SERIES = {Banach Center Publ.},
    VOLUME = {26, Part 2},
     PAGES = {57--70},
 PUBLISHER = {PWN, Warsaw},
      YEAR = {1990},
      ISBN = {83-01-09579-2},
   MRCLASS = {13D25 (13H10)},
  MRNUMBER = {1171260},
}

@book{jonsson2008simplicial,
  title={Simplicial complexes of graphs},
  author={Jonsson, Jakob},
  volume={1928},
  year={2008},
  publisher={Springer}
}

@article{Chauhan2025,
  author    = {Pratiksha Chauhan and Samir Shukla and Kumar Vinayak},
  title     = {Shellability of 3-cut complexes of squared cycle graphs},
  journal   = {Journal of Homotopy and Related Structures},
  year      = {2025},
  volume    = {20},
  number    = {1},
  pages     = {163--193},
  doi       = {10.1007/s40062-025-00365-w},
  url       = {https://doi.org/10.1007/s40062-025-00365-w},
  issn      = {1512-2891},
  abstract  = {For a positive integer k, the k-cut complex of a graph G is the simplicial complex whose facets are the (|V(G)|-k)-subsets σ of the vertex set V(G) of G such that the induced subgraph of G on V(G) \ σ is disconnected. These complexes first appeared in the master thesis of Denker and were further studied by Bayer et al. (SIAM J Discrete Math 38(2):1630–1675, 2024). In the same article, Bayer et al. conjectured that for k ≥ 3, the k-cut complexes of squared cycle graphs are shellable. Moreover, they also conjectured about the Betti numbers of these complexes when k=3. In this article, we prove these conjectures for k=3.}
}

@misc{chandrakar2024topologytotalcutcut,
      title={Topology of total cut and cut complexes of grid graphs}, 
      author={Himanshu Chandrakar and Nisith Ranjan Hazra and Debotosh Rout and Anurag Singh},
      year={2024},
      Journal = {ArXiv},
      eprint={2408.07646},
      archivePrefix={arXiv},
      primaryClass={math.CO},
      url={https://arxiv.org/abs/2408.07646}, 
      note = {arXiv preprint}
}

@manual{sagemath,
  Key          = {SageMath},
  Author       = {{The Sage Developers}},
  Title        = {{S}ageMath, the {S}age {M}athematics {S}oftware {S}ystem},
  note         = {{\tt https://www.sagemath.org}},
  Year         = {2024},
}

@book {kozlov2008combinatorial,
    AUTHOR = {Kozlov, Dmitry},
     TITLE = {Combinatorial algebraic topology},
    SERIES = {Algorithms and Computation in Mathematics},
    VOLUME = {21},
 PUBLISHER = {Springer, Berlin},
      YEAR = {2008},
     PAGES = {xx+389},
      ISBN = {978-3-540-71961-8},
   MRCLASS = {55-02 (05C15 05C25 06A07 52B70 55U10 57-02)},
  MRNUMBER = {2361455},
MRREVIEWER = {Rade\ \v Zivaljevi\'c},
}

@book {hatcher2005algebraic,
    AUTHOR = {Hatcher, Allen},
     TITLE = {Algebraic topology},
 PUBLISHER = {Cambridge University Press, Cambridge},
      YEAR = {2002},
     PAGES = {xii+544},
      ISBN = {0-521-79160-X; 0-521-79540-0},
   MRCLASS = {55-01 (55-00)},
  MRNUMBER = {1867354},
MRREVIEWER = {Donald\ W.\ Kahn},
}
    
\end{document}